\newcommand{\D}{{\mathcal D}}
\newcommand{\ip}[2]{\left\langle #1 , #2 \right\rangle}    % inner product
\newcommand{\Oh}{{\mathcal O}}
\newcommand{\R}{{\mathbb R}}
\newcommand{\gcggap}{{\mathsf{CGgap}}}
\newcommand{\gmdgap}{{\mathsf{MDgap}}}
\newcommand{\Hgap}{{\mathsf{HYBgap}}}
\newcommand{\vertiii}[1]{{\vert\kern-0.25ex\vert\kern-0.25ex\vert #1 
    \vert\kern-0.25ex\vert\kern-0.25ex\vert}}
\DeclareMathOperator*{\argmin}{argmin}
\DeclareMathOperator*{\argmax}{argmax}
\DeclareMathOperator{\dom}{dom}
\newtheorem{theorem}{Theorem}
\newtheorem{proposition}{Proposition}
\newtheorem{corollary}{Corollary}
\newtheorem{definition}{Definition}
\newtheorem{assumption}{Assumption}
\title{Generalized conditional subgradient and generalized mirror descent: duality, convergence, and symmetry}
\author{Javier F. Pe\~na\thanks{Tepper School of Business,
Carnegie Mellon University, USA, {\tt jfp@andrew.cmu.edu}}}
\begin{document}

\maketitle

\begin{abstract} We provide new insight into a {\em generalized conditional subgradient} algorithm and a {\em generalized mirror descent} algorithm for the convex minimization problem
\[
\min_x \; \{f(Ax) + h(x)\}.\]
As Bach showed in [{\em SIAM J. Optim.}, 25 (2015), pp. 115--129], applying either of these two algorithms to this problem is equivalent to applying the other one to its Fenchel dual. We leverage this duality relationship to develop  new upper bounds and convergence results for the gap between the primal and dual iterates generated by these two algorithms.   We also propose a new {\em primal-dual hybrid} algorithm that combines features of the conditional subgradient and mirror descent algorithms to solve the primal and dual problems in a symmetric fashion.  Our algorithms and main results rely only on the availability of computable oracles for $\partial f$ and $\partial h^*$, and for $A$ and $A^*$.

\end{abstract}

\medskip

\noindent{\bf AMS Subject Classification:} 	90C25, 90C46
%65K10, %	Optimization and variational techniques
%65F22, %	Ill-posedness, regularization
%90C25  % 	Convex programming

\medskip

\noindent{\bf Keywords:}
conditional gradient, mirror descent, Fenchel duality, symmetry

%\end{abstract}

%\tableofcontents
\newpage

\section{Introduction}
Consider the convex minimization problem
\begin{equation}\label{eq.the.problem}
\min_{x\in X} \; \{f(Ax) + h(x)\} 
\end{equation}
where $A:X\rightarrow Y$ is a linear mapping between finite dimensional real vector spaces $X,Y$,  and $f:Y\rightarrow \R\cup\{\infty\}$ and 
$h:X\rightarrow \R\cup\{\infty\}$ are closed convex functions.  

We provide new insight into two natural algorithms for~\eqref{eq.the.problem}, namely a  {\em generalized conditional subgradient} algorithm and a {\em generalized mirror descent} algorithm.
These algorithms rely only on the availability of computable oracles for $\partial f$ and $\partial h^*$.  Our approach hinges on the interesting pairing established by Bach~\cite{Bach15} between these algorithms and the  problem~\eqref{eq.the.problem} and its Fenchel dual
\begin{equation}\label{eq.the.problem.dual}
\max_{u\in Y^*} \; \{-f^*(u) - h^*(-A^* u)\}. 
\end{equation}
Here $A^*:Y^*\rightarrow X^*$ is the adjoint of $A$, and $f^*:Y^* \rightarrow \R\cup\{\infty\}$ and $h^*:X^* \rightarrow \R\cup\{\infty\}$ are the Fenchel conjugates of $f$ and $h$ respectively~\cite{BorwL00,HiriL93,Rock70}.

As Bach~\cite{Bach15} showed, and as we detail in Section~\ref{sec.duality} below, applying the generalized conditional subgradient algorithm  to~\eqref{eq.the.problem} is equivalent to applying the generalized mirror descent algorithm to~\eqref{eq.the.problem.dual}.  Alternatively,
applying the generalized mirror descent algorithm to~\eqref{eq.the.problem} is equivalent to applying the generalized conditional subgradient algorithm to~\eqref{eq.the.problem.dual}.

 Our central results (Theorem~\ref{thm.main} and Theorem~\ref{thm.main.3}) take this duality relationship further.  These results give generic upper bounds on the gap between the primal and dual iterates generated by the generalized conditional subgradient, the generalized mirror descent, and a new {\em primal-dual hybrid} algorithm.  We subsequently leverage these  results to obtain an interesting generalization of the classic $\Oh(1/k)$ convergence rate of the conditional gradient algorithm~\cite{FreuG16,Jagg13}. More precisely, we show that when the step sizes are properly chosen, the duality gap between the primal and an average of dual iterates generated by the generalized conditional subgradient algorithm converges to zero at a rate $\Oh(1/k^{\gamma-1})$ provided $f$ satisfies a suitable $\gamma$-curvature condition relative to $h$ for some constant $\gamma > 1$.  We obtain analogous results for the generalized mirror descent and for the primal-dual hybrid algorithms. The classic $\Oh(1/k)$ rate corresponds to the special case $\gamma =2$.

\subsection{Positioning of the paper and related work}
This paper  sheds new light on the close duality connection between generalizations of two popular algorithmic schemes for problems of the form~\eqref{eq.the.problem}, namely the conditional gradient (also known as Frank-Wolfe) algorithm~\cite{FranW56,FreuG16,Jagg13} and the mirror descent algorithm~\cite{BeckT03,DuchSST10,NemiY83}.  
% %due to their effectiveness in for solving challenging problems in several disciplines.  
The conditional gradient and the mirror descent algorithms share the feature of not requiring any orthogonal projections.  This feature makes them attractive in a variety of applications where orthogonal projections are too costly or impractical but where subgradient oracles and Bregman projections are viable.  
Both the conditional gradient and mirror descent algorithms as well as numerous variants of them have been subjects of active research for several years.  Some of the many articles in this rapidly evolving literature include~\cite{BrauPTW18,BausBT16,Clar10,FreuG16,GidJL16,HarcJN15,Jagg13,LanZ16,LuFN18,Lu19,RaoSW15,Tebo18,YuZS17,YurtUTC17} as well as the many references therein.

Our work is inspired by and extends the ideas and results introduced by Bach~\cite{Bach15}, who showed the correspondence between a generalized conditional subgradient algorithm and a generalized mirror descent algorithm for~\eqref{eq.the.problem} and~\eqref{eq.the.problem.dual}.  Bach~\cite{Bach15} also showed convergence rates for both algorithms under certain strong convexity and Lipschitz assumptions.  In contrast to the approach followed by Bach~\cite{Bach15}, a main feature of our work is our focus on the gap between the primal and dual iterates.
% generated by the generalized conditional gradient, generalized mirror descent, and primal-dual hybrid algorithms.  
Another main feature of our approach is the lack of reliance on any strong convexity or Lipschitz conditions.  Indeed, our algorithms and results make no references to any norms in $X$ or $Y$ at all.  Our approach enables us to give tighter and more general analyses of the generalized conditional subgradient, generalized mirror descent, and a new primal-dual hybrid algorithms under very general and mild assumptions.   Our duality gap approach also suggests a novel line-search strategy for selecting the step sizes in these three algorithms.  This strategy in turn gives an interesting generalization of the classical $\Oh(1/k)$ convergence rate of the conditional gradient algorithm~\cite{Jagg13,FreuG16}.  Our generalization relies on a new concept of {\em relative $\gamma$-curvature}.  This concept is a natural extension of the curvature constant introduced by Jaggi~\cite{Jagg13}.  It is also similar in spirit to the concepts of  {\em relative continuity} and {\em relative smoothness} as defined by  Lu~\cite{Lu19}, Lu et al.~\cite{LuFN18},  Bauschke et al.~\cite{BausBT16},  and Teboulle~\cite{Tebo18}.

Our algorithms and main results rely solely on the minimal conditions stated in Assumption~\ref{assum.blanket} in Section~\ref{sec.background}. 
This assumption concerns the availability of computable oracles for $\partial f$ and $\partial h^*$ and the compatibility of the ranges and domains of these oracles. Our convergence results rely on the additional mild Assumption~\ref{assum.bregman} concerning the computability of some generalized Bregman distances.  Our algorithms and results are readily invariant under invertible affine transformations of the spaces $X$ and $Y$.

\subsection{Organization of the paper}
The main sections of the paper are organized as follows.  Section~\ref{sec.background} describes some technical assumptions that we make throughout the paper.  This section also introduces the concept of {\em generalized Bregman distance} that plays a central role in this paper.
 Section~\ref{sec.algos} through Section~\ref{sec.convergence} present our main developments.  For exposition purposes, these sections consider the special case when $X=Y$ and $A:X\rightarrow X$ is the identity.  This simplification enables us to convey the gist of our developments more easily. Section~\ref{sec.extensions} describes how all of our developments extend to the more general problem~\eqref{eq.the.problem}.  
 
 In the special case when $X=Y$ and $A:X\rightarrow X$ is the identity, problem~\eqref{eq.the.problem} becomes
\begin{equation}\label{eq.primal}
\min_{x\in X} \; \{ f(x) + h(x) \}.
\end{equation}
%This simpler context allows us to convey the essence of our developments more easily.  
Section~\ref{sec.algos} motivates and presents Algorithm~\ref{algo.gcg} and Algorithm~\ref{algo.gmd} which give descriptions of the generalized conditional subgradient and generalized mirror descent algorithms for problem~\eqref{eq.primal}.   Section~\ref{sec.duality} presents our core developments.  First, we detail the equivalence between applying Algorithm~\ref{algo.gcg} to~\eqref{eq.primal} and applying Algorithm~\ref{algo.gmd} to the Fenchel dual of~\eqref{eq.primal}.  We then introduce a new {\em primal-dual hybrid} algorithm, namely Algorithm~\ref{algo.hybrid}, that combines features of Algorithm~\ref{algo.gcg} and Algorithm~\ref{algo.gmd} in a perfectly symmetric fashion. Section~\ref{sec.duality} also presents our main results, namely Theorem~\ref{thm.main} and Theorem~\ref{thm.main.3}.  Theorem~\ref{thm.main} establishes a bound on the duality gap between the $(k+1)$-th primal iterate and a convex combination of the first $k$ dual iterates generated when Algorithm~\ref{algo.gcg} is applied to~\eqref{eq.primal}.  As Corollary~\ref{thm.main.2} states, an equivalent dual result readily follows when  Algorithm~\ref{algo.gmd} is applied to~\eqref{eq.primal}.  Theorem~\ref{thm.main.3} 
gives a similar bound on the gap between the primal and dual iterates generated by  Algorithm~\ref{algo.hybrid}.

Section~\ref{sec.convergence} leverages the results of Section~\ref{sec.duality} to bound the rate convergence to zero of the gap between primal and dual iterates generated when Algorithm~\ref{algo.gcg}, Algorithm~\ref{algo.gmd}, or Algorithm~\ref{algo.hybrid} is applied to~\eqref{eq.primal} and the step sizes are chosen judiciously.  Theorem~\ref{thm.gcg} shows that for Algorithm~\ref{algo.gcg} this gap converges to zero at a rate $\Oh(1/k^{\gamma -1})$ when $f$ satisfies a suitable $\gamma$-curvature condition relative to $h$ for $\gamma > 1$.  Corollary~\ref{thm.gmd} and Theorem~\ref{thm.hybrid} state analogous results for  Algorithm~\ref{algo.gmd} and Algorithm~\ref{algo.hybrid}.

Finally, Section~\ref{sec.extensions} shows how all of our developments extend to the more general problem~\eqref{eq.the.problem}.

\section{Technical background}
\label{sec.background}
We will rely on basic convex analysis machinery concerning convex functions, subgradients, Fenchel conjugate, and Fenchel duality as presented in the textbooks~\cite{BorwL00,HiriL93,Rock70}.

\subsection{Technical assumptions}
\label{sec.technical}
Throughout the paper we will make the following blanket assumption about the tuple $(X,Y,A,f,h)$.

\begin{assumption}\label{assum.blanket} {\em $X,Y$ are finite dimensional real vector spaces, $A:X\rightarrow Y$ is a linear mapping, and  $f:Y\rightarrow \R\cup\{\infty\}$ and $h:X\rightarrow \R\cup\{\infty\}$ are closed convex functions. 
There are available oracles that compute $x\mapsto Ax$ and $y\mapsto A^*u$ for all $x\in X, u\in Y^*$. Furthermore, for all $y\in \dom(\partial f)$ and $u \in \dom(\partial h^*)$ the following two conditions hold:
\begin{description}\item[(i)] there are available oracles that compute
\[
y\mapsto \partial f(y):=\argmax_{v \in Y^*} \{\ip{v}{y} - f^*(v)\}
\]
and
\[
u\mapsto \partial h^*(u):=\argmax_{x \in X} \{\ip{u}{x} - h(x)\},
\]
% such that the following two subgradient oracles are well-defined and computable 
\item[(ii)] $-A^* \partial f(y) \in \dom(\partial h^*)$ and $A\partial h^*(u) \in \dom(\partial f)$. 
%for all $y\in \dom(\partial f)$ and $u \in \dom(\partial h^*)$.
%\[A\dom(h) \subseteq \dom(\partial f)\; \text{ and } \; -A^* \dom(f) \subseteq \dom(\partial h^*).\]
\end{description}
}
\end{assumption}

The line-search procedures in Section~\ref{sec.convergence} and Section~\ref{sec.extensions} will require the following additional mild assumption.

\begin{assumption}\label{assum.bregman} {\em The following {\em generalized Bregman distances}~\cite{Breg67,CensL81} are computable  for all $x,y\in\dom(\partial f)$ and $ u,v\in\dom(\partial h^*)$:
\[
D_f(y,x) = f(y) - f(x) - \ip{\partial f(x)}{y-x},
\]
and
\[
D_{h^*}(v,u) = h^*(v) - h^*(u) - \ip{v-u}{\partial h^*(u)}. 
\]}
\end{assumption}

Assumption~\ref{assum.blanket} ensures the well-posedness of problem~\eqref{eq.the.problem} and  also ensures that~\eqref{eq.the.problem} is amenable to the algorithms introduced in Section~\ref{sec.algos} and Section~\ref{sec.extensions} below.  In particular, Assumption~\ref{assum.blanket} implies that if $x\in \dom(\partial f \circ A)$ and $u\in -\dom(\partial h^*\circ A^*)$ then the points $x_{+} := \partial h^*(-A^*u)$ and $u_{+} := \partial f(Ax)$ are feasible for~\eqref{eq.the.problem} and~\eqref{eq.the.problem.dual} respectively, that is, $x_+\in  \dom(f\circ A) \cap \dom(h)$ and $u_+\in \dom(f^*)\cap (-\dom(h^*\circ A^*))$.   The mapping
\[
(x,u) \mapsto (\partial h^*(-A^*u), \partial f(Ax))
\]
is at the heart of the algorithms in Section~\ref{sec.algos} and Section~\ref{sec.extensions} below.

\subsection{Some notational convention}
We will rely on the following convenient notational convention.  When $x = \partial h^*(v)$ for some $v\in \dom(\partial h^*)$, we will write $\partial h(x)$ to denote $v$.  In this case, we will also write $D_h(y,x)$ to denote
\[
D_h(y,x):= h(y) - h(x) - \ip{v}{y-x} = h(y) - h(x) - \ip{\partial h(x)}{y-x}.
\]
%We should note although in principle $\partial h(x)$ may contain other points different from $v$, the meaning of  $v = \partial h(x)$ will always be clear and unambiguous from our context.
In a symmetric fashion, when $u= \partial f(y)$ for some $y\in \dom(\partial f)$, we will  write $\partial f^*(u)$ to denote $y$ and $D_{f^*}(v,u)$ to denote
\[
D_{f^*}(v,u):= f^*(v) - f^*(u) - \ip{v-u}{y} = f^*(v) - f^*(u) - \ip{v-u}{\partial f^*(x)}.
\]
\subsection{Traditional conditional gradient context}
Assumption~\ref{assum.blanket} and Assumption~\ref{assum.bregman} readily hold in  the  usual set up of the conditional gradient algorithm~\cite{Jagg13,FreuG16}.  Consider the problem
\begin{equation}\label{eq.problem.simple}
\min_{x\in Q} \; f(x),
\end{equation}
where $f$ is differentiable on the compact convex set $Q \subseteq X$ and there is a linear   oracle that computes the support function
\[
u \mapsto \argmax_{x\in Q} \ip{u}{x}.
\]
Problem~\eqref{eq.problem.simple} can be written in the form~\eqref{eq.the.problem} by taking $h := \delta_{Q}$, the indicator function of the set $Q$, and $A:X\rightarrow X$ equal to the identity.  The  linear oracle for $Q$  corresponds to an oracle for $\partial h^*$ and the 
compactness of $Q$ implies that $\dom(\partial h^*) = X$.  Thus Assumption~\ref{assum.blanket} and Assumption~\ref{assum.bregman} readily hold in this case provided oracles for $f$ and $\nabla f$ are available.

\section{Generalized conditional subgradient and generalized mirror descent}
\label{sec.algos}
Suppose that $X=Y, \; A:X\rightarrow X$ is the identity mapping and suppose that the tuple $(X,Y,A,f,h)$ satisfies Assumption~\ref{assum.blanket}.
%Consider the convex minimization problem~\eqref{eq.primal} and assume that the technical assumptions detailed in Section~\ref{sec.technical} hold for $X=Y$ and for $A:X\rightarrow X$ equal to the identity.  

The thrust for this paper are two natural algorithmic schemes for solving~\eqref{eq.primal} via the available  oracles for $\partial f$ and $\partial h^*$.  The first scheme is a generalized version of the conditional gradient (also known as Frank-Wolfe) algorithm~\cite{FranW56,FreuG16,Jagg13}  based on the following update.  Given a current trial solution $x$ for~\eqref{eq.primal}, compute a new %(hopefully improved) 
trial solution $x_+$ via 
%the following generalized conditional gradient update
\[
x_+ := (1-\alpha) x + \alpha s
\]
where $\alpha \in [0,1]$ and
\begin{align*}
s &:= \argmin_{y \in X} \{\ip{\partial f(x)}{y} + h(y)\} = \partial h^*(-\partial f(x)).
\end{align*}
The second scheme is a generalized mirror descent algorithm~\cite{BeckT03,DuchSST10,NemiY83} based on the following  update. Given a  current trial solution $y$ that satisfies $y = \partial h^*(v)$ for some $v\in \dom(h^*)$, compute a new trial solution $y_+$ via 
%the following generalized mirror descent update
\begin{align*}
y_+ &= \argmin_{z\in X}\{\alpha\ip{\partial f(y) + v}{z} + D_h(z,y)\} \\
& = \argmin_{z\in X}\{\alpha\ip{\partial f(y) + \partial h(y)}{z} + D_h(z,y)\}.
\end{align*}
The generalized conditional subgradient update can be written as
\begin{equation}\label{eq.gcg}
x_+ = (1-\alpha) x + \alpha \partial h^*(-\partial f(x)).
\end{equation}
On the other hand, the generalized mirror descent update can be written as
%\[\partial h(y_+) = (1-\alpha)\partial h(y) - \alpha \partial f(y).\]or equivalently as
\begin{align}\label{eq.gmd}
y_+ &= \argmin_{z\in X}\{\ip{\alpha\partial f(y) + (\alpha-1)\partial h(y)}{z} + h(z)\} \notag \\ &=\partial h^*((1-\alpha)\partial h(y) - \alpha \partial f(y)).
\end{align}
Observe the striking similarity between~\eqref{eq.gcg} and~\eqref{eq.gmd}.  This similarity is at the heart of our main developments.  As we detail below, the similarity between~\eqref{eq.gcg} and~\eqref{eq.gmd} underlies  the duality between the generalized conditional subgradient and the generalized mirror descent algorithms~\cite{Bach15}.

Algorithm~\ref{algo.gcg} and Algorithm~\ref{algo.gmd} give  descriptions of a generalized conditional subgradient and a generalized mirror descent algorithm for~\eqref{eq.primal} respectively.  It is easy to see that the iterates generated by Algorithm~\ref{algo.gcg} and by  Algorithm~\ref{algo.gmd} implement the update rules~\eqref{eq.gcg} and~\eqref{eq.gmd} respectively. 
%However, since we are not assuming the availability of a subgradient oracle for $h$, 
We chose the descriptions in Algorithm~\ref{algo.gcg} and Algorithm~\ref{algo.gmd} to make the resemblance between the two algorithms more salient and to highlight that the algorithms only rely on the oracles for $\partial f$ and $\partial h^*$.  
\begin{algorithm}
\caption{Generalized conditional subgradient}\label{algo.gcg}
\begin{algorithmic}[1]
	\STATE {\bf input:}  $(f,h)$ and $x_0 \in \dom(\partial f)$
	\FOR{$k=0,1,2,\dots$}
	\STATE $u_k := \partial f(x_k)$
		\STATE $s_{k}:=\partial h^*(-u_k)$
		\STATE pick $\alpha_k \in [0,1]$ 
		\STATE $x_{k+1}:=(1-\alpha_k)x_k+\alpha_ks_k$
	\ENDFOR
\end{algorithmic}
\end{algorithm}

\begin{algorithm}
\caption{Generalized mirror descent}\label{algo.gmd}
\begin{algorithmic}[1]
	\STATE {\bf input:}  $(f, h)$ and  $v_0\in \dom(\partial h^*)$
	\FOR{$k=0,1,2,\dots$}
		\STATE $y_{k}:= \partial h^*(v_{k})$
		\STATE $z_k := \partial f(y_k)$ 
		\STATE pick $\alpha_k \in [0,1]$ 
		\STATE $v_{k+1}:= (1-\alpha_k)v_k-\alpha_kz_k$
	\ENDFOR
\end{algorithmic}
\end{algorithm}

\section{Duality and main results}
\label{sec.duality}
Again suppose that $X=Y, \; A:X\rightarrow X$ is the identity mapping and suppose that the tuple $(X,Y,A,f,h)$ satisfies Assumption~\ref{assum.blanket}.
Consider the Fenchel dual~\cite{BorwL00,HiriL93,Rock70} of~\eqref{eq.primal} 
\begin{equation}\label{eq.dual}
\max_{u\in X^*} \; \{ -f^*(u) - h^*(-u) \}, \end{equation}
which can be written as
\begin{equation}\label{eq.dual.again}
\min_{v\in X^*} \; \{h^*(v)+\tilde  f^*(v)\}
\end{equation}
for $\tilde f:X\rightarrow \R\cup\{\infty\}$ is defined via $\tilde f(y) := f(-y).$

Notice the nice symmetry between~\eqref{eq.primal} and~\eqref{eq.dual.again}.  The subgradient oracles for the pair $(f,h^*)$ are in one-to-one correspondence with subgradient oracles for the pair $(h^*,\tilde f^{**})$ respectively.  Thus, like problem~\eqref{eq.primal}, problem~\eqref{eq.dual.again} is amenable to both Algorithm~\ref{algo.gcg} and Algorithm~\ref{algo.gmd}.

Furthermore,  as shown by Bach~\cite{Bach15}, the following interesting duality between these two algorithms holds.  Running Algorithm~\ref{algo.gcg} on~\eqref{eq.primal} is identical to running Algorithm~\ref{algo.gmd} on~\eqref{eq.dual.again}.
More precisely, the iterates generated by
Algorithm~\ref{algo.gcg} applied to $(f,h)$ and started from $x_0\in\dom(\partial f)$ are the same, modulo some flipped signs, as those generated by Algorithm~\ref{algo.gmd} applied to $(h^*,\tilde f^*)$ %in place of $(f,h)$ 
and started from $v_0=-x_0\in\dom(\partial \tilde f^{**})$.  
Indeed, by letting $(v_k,y_k,z_k):=(-x_k,-u_k,s_k)$, the update at each iteration of Algorithm~\ref{algo.gcg}
\[
x_+ = (1-\alpha)x+\alpha s = (1-\alpha)x+\alpha \partial h^*(-u), \;\; u=\partial f(x)
\]
can be written as
\[
v_+ = (1-\alpha) v -\alpha z = (1-\alpha) v -\alpha \partial h^*(y), \;\; y = -\partial f(-v) = \partial \tilde f^{**}(v),
\]
which is exactly the update at each iteration of Algorithm~\ref{algo.gmd} applied to $(h^*, \tilde f^*)$.% in place of $(f,h)$.

Based on the above duality relationship, we propose a new {\em primal-dual hybrid} algorithm described in Algorithm~\ref{algo.hybrid} below.  Unlike Algorithm~\ref{algo.gcg} and Algorithm~\ref{algo.gmd}, Algorithm~\ref{algo.hybrid} is symmetric when applied to~\eqref{eq.primal} and~\eqref{eq.dual.again}.  That is, Algorithm~\ref{algo.hybrid} applied to $(f,h)$ and started from $x_0 \in \dom(\partial f), \; u_0\in -\dom(\partial h^*)$
generates the same iterates as it does when applied to $(h^*,\tilde f^*)$ and started from 
$-u_0\in \dom(\partial h^*), \, x_0 \in -\dom(\partial \tilde f^{**})$.

\begin{algorithm}
\caption{Primal-dual hybrid}\label{algo.hybrid}
\begin{algorithmic}[1]
	\STATE {\bf input:}  $(f, h)$ and  $x_0 \in \dom(\partial f), \; u_0\in -\dom(\partial h^*)$
	\FOR{$k=0,1,2,\dots$}
		\STATE $(s_{k},z_k):=(\partial h^*(-u_k),\partial f(x_k))$ 
		\STATE pick $\alpha_k \in [0,1]$ 		
		\STATE $(x_{k+1},u_{k+1}):=(1-\alpha_k)(x_k,u_k)+\alpha_k(s_k,z_k)$
	\ENDFOR
\end{algorithmic}
\end{algorithm}

\medskip

The weak duality relationship between~\eqref{eq.primal} and~\eqref{eq.dual}, which is well-known and easy  to show~\cite{BorwL00,HiriL93,Rock70}, is equivalent to the non-negativity of the  duality gap:
\[
f(x) + h(x) + f^*(u) + h^*(-u) \ge 0 \; 
\text{ for all } \; x\in X,\; u\in X^*.
\]
The next two theorems, which are the central results of this paper, provide {\em upper} bounds on the duality gap for the primal and dual iterates 
generated by Algorithm~\ref{algo.gcg}, Algorithm~\ref{algo.gmd}, and Algorithm~\ref{algo.hybrid}.  As we detail in Section~\ref{sec.convergence} below, Theorem~\ref{thm.main} and Theorem~\ref{thm.main.3} in turn imply that the duality gap converges to zero at a rate $\Oh(1/k^{\gamma-1})$ for $\gamma>1$ when the pair of functions $(f,h)$ satisfies a suitable relative $\gamma$-curvature conditions and the step sizes are judiciously chosen.  

The statements below will rely on the following double sequences $\lambda^k_i, \mu^k_i$ for $ k=1,2,\dots$ and $i=0,1,\dots,k-1$  determined by a sequence of step sizes $\alpha_k \in [0,1], \; k=0,1,\dots$.  For $k=0,1,2,\dots$ let
\begin{align}\label{eq.lm}
&\lambda^{k+1}_k = \alpha_k, \; \mu^{k+1}_k = 1\notag \\
&\lambda^{k+1}_i = (1-\alpha_k) \lambda^k_i, \; i=0,\dots,k-1 \\
&\mu^{k+1}_i = (1-\alpha_k) \mu^k_i, \; i=0,\dots,k-1. \notag
\end{align}
Observe that if $\alpha_0 = 1$ then for each $k=1,2,\dots$ we have $\lambda^k_i \ge 0,\; i=0,1,\dots,k-1$ and $\sum_{i=0}^{k-1} \lambda^k_i = 1$.

Our main statements will rely on the following notation.  For $x,s\in \dom(f)\cap \dom(h)$ and $\alpha \in [0,1]$ let
\[
\D_{f,h}(x,s,\alpha):=
D_f((1-\alpha)x+\alpha s,x)+ h((1-\alpha)x + \alpha s) - (1-\alpha)h(x) - \alpha h(s).
\]
The convexity of $h$ readily implies that
\[
\D_{f,h}(x,s,\alpha)\le
D_f((1-\alpha)x+\alpha s,x).
\]

\begin{theorem}\label{thm.main} Let $(x_k,u_k,s_k),\; k=0,1,2,\dots$ be the sequence of iterates generated by Algorithm~\ref{algo.gcg} applied to~\eqref{eq.primal}.  If $\alpha_0 = 1$ then for $k=1,2,\dots$ 
\begin{equation}\label{eq.main}
 \sum_{i=0}^{k-1} \lambda_i^k(f^*(u_i) + h^*(-u_i))
- \sum_{i=0}^{k-1} \mu_i^k \D_{f,h}(x_i,s_i,\alpha_i) =
-f(x_k) - h(x_k)
\end{equation}
where $\lambda_i^k, \mu_i^k$ are as in~\eqref{eq.lm}.
\end{theorem}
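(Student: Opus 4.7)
The natural strategy is induction on $k$, exploiting the recursive definition of $\lambda_i^k, \mu_i^k$ in~\eqref{eq.lm} together with the Fenchel conjugate identities
\[
f^*(u_k) = \ip{u_k}{x_k}-f(x_k), \qquad h^*(-u_k) = -\ip{u_k}{s_k}-h(s_k),
\]
which hold because $u_k=\partial f(x_k)$ and $s_k=\partial h^*(-u_k)$.

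For the base case $k=1$, the assumption $\alpha_0=1$ forces $\lambda^1_0=\mu^1_0=1$ and $x_1=s_0$. I would plug everything in, notice that $\D_{f,h}(x_0,s_0,1)$ collapses to $D_f(s_0,x_0)=f(s_0)-f(x_0)-\ip{u_0}{s_0-x_0}$, and cancel the inner-product and $f(x_0)$ terms against $f^*(u_0)+h^*(-u_0)$, leaving exactly $-f(s_0)-h(s_0)=-f(x_1)-h(x_1)$.

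For the inductive step, I would split the two sums on the left-hand side of~\eqref{eq.main} at index $k$ into a tail (indices $0,\dots,k-1$) and the $i=k$ contribution. The recursions $\lambda_i^{k+1}=(1-\alpha_k)\lambda_i^k$ and $\mu_i^{k+1}=(1-\alpha_k)\mu_i^k$ factor $(1-\alpha_k)$ out of the tail, which by the induction hypothesis equals $(1-\alpha_k)(-f(x_k)-h(x_k))$. What remains is the identity
\[
\alpha_k\bigl(f^*(u_k)+h^*(-u_k)\bigr) - \D_{f,h}(x_k,s_k,\alpha_k) = -f(x_{k+1})-h(x_{k+1}) + (1-\alpha_k)\bigl(f(x_k)+h(x_k)\bigr),
\]
which I would verify directly. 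Substituting $x_{k+1}=(1-\alpha_k)x_k+\alpha_k s_k$ into
\[
\D_{f,h}(x_k,s_k,\alpha_k) = f(x_{k+1})-f(x_k)-\alpha_k\ip{u_k}{s_k-x_k}+h(x_{k+1})-(1-\alpha_k)h(x_k)-\alpha_k h(s_k)
\]
and using the two Fenchel identities above to rewrite $\alpha_k(f^*(u_k)+h^*(-u_k))=\alpha_k(-\ip{u_k}{s_k-x_k}-f(x_k)-h(s_k))$ produces a telescoping cancellation of the $\alpha_k\ip{u_k}{s_k-x_k}$ and $\alpha_k h(s_k)$ terms.

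The proof is almost entirely bookkeeping; I expect the only real care point to be tracking the signs and indices so that the tail contribution separates cleanly from the new $i=k$ term, and ensuring that the $\D_{f,h}$ term's mixing of $D_f$ and $h$-differences is expanded consistently with the Fenchel identities. No compactness, norm, or curvature hypothesis is needed for this step; the equality is purely structural, which explains why it serves as the backbone for the later $\Oh(1/k^{\gamma-1})$ bounds once $\D_{f,h}$ is controlled by a relative curvature assumption.
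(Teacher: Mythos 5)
Your proposal is correct and follows essentially the same route as the paper: the same induction on $k$, the same Fenchel--Young identity $f^*(u_k)+h^*(-u_k)=\ip{u_k}{x_k-s_k}-f(x_k)-h(s_k)$, the same collapse of $\D_{f,h}(x_0,s_0,1)$ to $D_f(s_0,x_0)$ in the base case, and an inductive step that is the paper's ``add $(1-\alpha_k)$ times the hypothesis plus $\alpha_k$ times the Fenchel identity'' rewritten as a tail-plus-new-term split (correctly accounting for $\mu_k^{k+1}=1$ versus $\lambda_k^{k+1}=\alpha_k$). No gaps.
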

\begin{proof}  First, $u_k = \partial f(x_k)$ and $s_k = \partial h^*(-u_k)$ imply that
\begin{align}\label{ind.step}
f^*(u_k) + h^*(-u_k) %&= f^*(u_k) + \ip{-u_k}{s_k} - h(s_k) \notag \\
&= \ip{u_k}{x_k-s_k} - f(x_k)-h(s_k).
\end{align}
We now prove~\eqref{eq.main} by induction.  
For $k=1$ we have  $x_1 = s_0= \partial h^*(-u_0)$ because $\alpha_0 = 1$.  Thus~\eqref{ind.step} implies
\begin{align*}
f^*(u_0) + h^*(-u_0) 
&= \ip{u_0}{x_0-x_1} -f(x_0) - h(x_1)\\
&=  - \ip{\partial f(x_0)}{x_1-x_0} - f(x_0) - h(x_1) \\
&= \D_{f,h}(x_0,s_0,1) - f(x_1) - h(x_1).
\end{align*} 
Hence~\eqref{eq.main} holds for $k=1$ since $\alpha_0=1$ and $\lambda^1_0=\mu^1_0=1$.

Suppose~\eqref{eq.main} holds for $k \ge 1$.  Adding up $(1-\alpha_k)$ times~\eqref{eq.main} plus $\alpha_k$ times~\eqref{ind.step}, and using~\eqref{eq.lm} and $x_{k+1} = (1-\alpha_k)x_k + \alpha_k s_k$ we obtain
\begin{align*}
&\sum_{i=0}^{k} \lambda_i^{k+1}(f^*(u_i) + h^*(-u_i))
-\sum_{i=0}^{k}\mu_i^{k+1} \D_{f,h}(x_i,s_i,\alpha_i) \\
&= -f(x_k) - (1-\alpha_k)h(x_k) - \alpha_k h(s_k) + \alpha_k\ip{u_k}{x_k-s_k} - \D_{f,h}(x_k,s_k,\alpha_k)\\
&= -f(x_k) - (1-\alpha_k)h(x_k) - \alpha_k h(s_k) + \ip{\partial f(x_k)}{x_k-s_k} - \D_{f,h}(x_k,s_k,\alpha_k)\\
&= -f(x_{k+1}) - h(x_{k+1}).
\end{align*}
Therefore~\eqref{eq.main} holds for $k+1$ as well. 
\end{proof}

\begin{corollary}\label{corol.thm.main}
 Let $(x_k,u_k,s_k),\; k=0,1,2,\dots$ be the sequence of iterates generated by Algorithm~\ref{algo.gcg} applied to~\eqref{eq.primal}.  If $\alpha_0 = 1$ then for $k=1,2,\dots$ 
\[
f(x_k) + h(x_k) + \sum_{i=0}^{k-1} \lambda_i^k(f^*(u_i) + h^*(-u_i)) 
\le \sum_{i=0}^{k-1} \mu_i^k D_{f}(x_{i+1},x_i).
\]
\end{corollary}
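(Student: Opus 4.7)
The plan is to reduce the corollary directly to Theorem~\ref{thm.main}, which already delivers an exact identity. Starting from~\eqref{eq.main}, I would rearrange to obtain
\[
f(x_k) + h(x_k) + \sum_{i=0}^{k-1} \lambda_i^k (f^*(u_i) + h^*(-u_i)) = \sum_{i=0}^{k-1} \mu_i^k \D_{f,h}(x_i,s_i,\alpha_i),
\]
so that what remains is only a term-by-term upper bound on the right-hand side.

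The key inequality is the one the paper highlights just before Theorem~\ref{thm.main}: by convexity of $h$,
\[
h((1-\alpha)x + \alpha s) - (1-\alpha)h(x) - \alpha h(s) \le 0,
\]
hence $\D_{f,h}(x,s,\alpha) \le D_f((1-\alpha)x + \alpha s, \, x)$. Applying this with $(x,s,\alpha) = (x_i, s_i, \alpha_i)$ and using the definition $x_{i+1} = (1-\alpha_i)x_i + \alpha_i s_i$ from Algorithm~\ref{algo.gcg} yields
\[
\D_{f,h}(x_i,s_i,\alpha_i) \le D_f(x_{i+1}, x_i).
\]

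To finish I would only need to verify that the coefficients $\mu_i^k$ are nonnegative so that the inequality survives after being weighted and summed. This is immediate from the recursion~\eqref{eq.lm}: $\mu^{k+1}_k = 1$ and $\mu^{k+1}_i = (1-\alpha_k)\mu^k_i$ with $\alpha_k \in [0,1]$, so inductively $\mu_i^k \ge 0$ for all relevant $i,k$. Summing the pointwise bound with these nonnegative weights turns the identity from Theorem~\ref{thm.main} into the claimed inequality.

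There is no real obstacle here; the statement is essentially a corollary in the strict sense, being a one-line convexity relaxation of an exact equality. The only mild care needed is to make sure we are allowed to invoke Theorem~\ref{thm.main} (which requires $\alpha_0 = 1$, and this is assumed) and that the Bregman-style quantity $D_f(x_{i+1},x_i)$ is well-defined, which follows from Assumption~\ref{assum.blanket} guaranteeing $x_i \in \dom(\partial f)$ along the trajectory of Algorithm~\ref{algo.gcg}.
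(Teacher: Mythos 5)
Your proof is correct and follows exactly the paper's route: rearrange the identity of Theorem~\ref{thm.main} and apply the convexity bound $\D_{f,h}(x_i,s_i,\alpha_i) \le D_f(x_{i+1},x_i)$ termwise. The only addition is your explicit check that the weights $\mu_i^k$ are nonnegative, which the paper leaves implicit.
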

\begin{proof} This follows from Theorem~\ref{thm.main} and the fact that for $i=0,1,\dots,k-1$
\[
\D_{f,h}(x_i,s_i,\alpha_i) \le D_{f}((1-\alpha_i)x_{i} + \alpha_is_i,x_i) = D_{f}(x_{i+1},x_i).
\]
\end{proof}

The duality between~\eqref{eq.primal} and~\eqref{eq.dual} and between Algorithm~\ref{algo.gcg} and Algorithm~\ref{algo.gmd} automatically yields Corollary~\ref{thm.main.2}, which is a natural dual counterpart of Theorem~\ref{thm.main}.
Corollary~\ref{thm.main.2} relies on the following notation.  Observe that for $v,-z\in \dom(h^*)\cap \dom(\tilde f^*)$ and $\alpha \in [0,1]$ 
\begin{multline*}
\D_{h^*,\tilde f^*}(v,-z,\alpha)=
D_{h^*}((1-\alpha)v-\alpha z,v)+ \tilde f^*((1-\alpha)v - \alpha z) - (1-\alpha)\tilde f^*(v) - \alpha \tilde f^*(-z) \\
= D_{h^*}((1-\alpha)v-\alpha z,v)+  f^*(-(1-\alpha)v + \alpha z) - (1-\alpha) f^*(-v) - \alpha  f^*(z).
\end{multline*}
Once again, the convexity of $\tilde f^*$ implies that
\[
\D_{h^*,\tilde f^*}(v,-z,\alpha)\le
D_{h^*}((1-\alpha)v-\alpha z,v).
\]

\begin{corollary}\label{thm.main.2}
Let $(v_k,y_k,z_k),\; k=0,1,2,\dots$ be the sequence of iterates generated by Algorithm~\ref{algo.gmd} applied to~\eqref{eq.primal}.  If $\alpha_0 = 1$ then for $k=1,2,\dots$ 
\begin{equation}\label{eq.main.2}
 \sum_{i=0}^{k-1} \lambda_i^k(f(y_i) + h(y_i))
- \sum_{i=0}^{k-1} \mu_i^k \D_{h^*,\tilde f^*}(v_i,-z_i,\alpha_i) =
-f^*(-v_k) - h^*(v_k)
\end{equation}
where $\lambda_i^k, \mu_i^k$ are as in~\eqref{eq.lm}.  In particular, for $k=1,2,\dots$
\[
 \sum_{i=0}^{k-1} \lambda_i^k(f(y_i) + h(y_i)) + f^*(-v_k) + h^*(v_k)
\le \sum_{i=0}^{k-1} \mu_i^k D_{h^*}(v_{i+1},v_i).
\]
\end{corollary}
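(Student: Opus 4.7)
The plan is to deduce Corollary~\ref{thm.main.2} directly from Theorem~\ref{thm.main} by invoking the primal--dual equivalence between Algorithm~\ref{algo.gmd} and Algorithm~\ref{algo.gcg} that was already spelled out in Section~\ref{sec.duality}. Running Algorithm~\ref{algo.gmd} on $(f,h)$ from $v_0\in\dom(\partial h^*)$ is, up to a relabeling of variables, identical to running Algorithm~\ref{algo.gcg} on the swapped pair $(h^*,\tilde f^*)$ from the initial point $\hat x_0:=v_0$.  Hence Theorem~\ref{thm.main} applied to this ``dualized'' run will produce an identity that, once translated back into the $(f,h)$-iterates $(v_k,y_k,z_k)$, becomes exactly~\eqref{eq.main.2}; the inequality half of the statement then follows in the same manner as Corollary~\ref{corol.thm.main}.

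To carry this out, first I would pin down the iterate correspondence precisely.  Let $(\hat x_k,\hat u_k,\hat s_k)$ denote the sequence produced by Algorithm~\ref{algo.gcg} applied to $(h^*,\tilde f^*)$ from $\hat x_0=v_0$.  Using closedness of $f$ (so $\tilde f^{**}=\tilde f$) together with the elementary identity $\partial \tilde f(w)=-\partial f(-w)$, one checks that $\hat x_k=v_k$, $\hat u_k=\partial h^*(v_k)=y_k$, and $\hat s_k=\partial \tilde f^{**}(-\hat u_k)=-\partial f(y_k)=-z_k$; the GCG update $\hat x_{k+1}=(1-\alpha_k)\hat x_k+\alpha_k\hat s_k$ then collapses precisely to the GMD update $v_{k+1}=(1-\alpha_k)v_k-\alpha_k z_k$.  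Next, applying Theorem~\ref{thm.main} to the pair $(h^*,\tilde f^*)$ yields, for $k\ge 1$,
\[
\sum_{i=0}^{k-1}\lambda_i^k\bigl((h^*)^*(\hat u_i)+(\tilde f^*)^*(-\hat u_i)\bigr)-\sum_{i=0}^{k-1}\mu_i^k\,\D_{h^*,\tilde f^*}(\hat x_i,\hat s_i,\alpha_i) = -h^*(\hat x_k)-\tilde f^*(\hat x_k).
\]
Invoking $(h^*)^*=h$, $(\tilde f^*)^*=\tilde f$, and the reflection relations $\tilde f(-w)=f(w)$ and $\tilde f^*(v)=f^*(-v)$, and then substituting $(\hat x_i,\hat u_i,\hat s_i)=(v_i,y_i,-z_i)$, this identity becomes term-for-term~\eqref{eq.main.2}.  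The inequality in the second half of the corollary follows immediately from the bound $\D_{h^*,\tilde f^*}(v_i,-z_i,\alpha_i)\le D_{h^*}((1-\alpha_i)v_i-\alpha_i z_i,v_i)=D_{h^*}(v_{i+1},v_i)$ already recorded just before the statement.

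I do not anticipate any substantive obstacle: the whole proof is a clean transport of Theorem~\ref{thm.main} across the primal--dual correspondence.  The one thing that requires care is the bookkeeping of sign flips and conjugations, in particular the three identifications $\hat s_k=-z_k$, $(\tilde f^*)^*(-\hat u_i)=f(y_i)$, and $\tilde f^*(\hat x_k)=f^*(-v_k)$, together with the use of closedness of $f$ and $h$ to replace $(h^*)^{**}$ and $\tilde f^{**}$ by $h$ and $\tilde f$ respectively.
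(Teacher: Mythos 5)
Your proposal is correct and is exactly the argument the paper intends: the paper states that Corollary~\ref{thm.main.2} follows ``automatically'' from the duality between Algorithm~\ref{algo.gcg} and Algorithm~\ref{algo.gmd} without writing out the details, and your proof simply makes that transfer explicit via the correspondence $(\hat x_k,\hat u_k,\hat s_k)=(v_k,y_k,-z_k)$, the conjugacy identities $(h^*)^*=h$, $(\tilde f^*)^*=\tilde f$, $\tilde f^*(v)=f^*(-v)$, and the bound $\D_{h^*,\tilde f^*}(v_i,-z_i,\alpha_i)\le D_{h^*}(v_{i+1},v_i)$. All the sign and conjugation bookkeeping checks out, so no gaps.
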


We also have the following symmetric analogue of Theorem~\ref{thm.main}.

\begin{theorem}\label{thm.main.3} 
 Let $(x_k, u_k, s_k, z_k),\; k=0,1,2,\dots$ be the sequence of iterates generated by
Algorithm~\ref{algo.hybrid}. If $\alpha_0=1$ then for $k=1,2,\dots$
\begin{multline}\label{eq.hybrid}
f(x_k) + h(x_k) + f^*(u_k) + h^*(-u_k) \\ = \sum_{i=0}^{k-1} 
\mu^k_i(\D_{f,h}(x_i,s_i,\alpha_i) + \D_{h^*,\tilde f^*}(-u_{i},-z_i,\alpha_i)),
\end{multline}
where $\mu_i^k$ are as in~\eqref{eq.lm}.
\end{theorem}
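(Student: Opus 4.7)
The plan is to mirror the inductive argument used for Theorem~\ref{thm.main}, but now keeping track of both primal and dual quantities simultaneously since Algorithm~\ref{algo.hybrid} updates $x_k$ and $u_k$ in a symmetric fashion. Let $G_k := f(x_k)+h(x_k)+f^*(u_k)+h^*(-u_k)$ and let $T_k$ denote the right-hand side of~\eqref{eq.hybrid}. The recursions~\eqref{eq.lm} imply
\[
T_{k+1} = (1-\alpha_k) T_k + \D_{f,h}(x_k,s_k,\alpha_k) + \D_{h^*,\tilde f^*}(-u_k,-z_k,\alpha_k),
\]
so it suffices to establish the one-step identity
\begin{equation}\label{plan.onestep}
G_{k+1} = (1-\alpha_k) G_k + \D_{f,h}(x_k,s_k,\alpha_k) + \D_{h^*,\tilde f^*}(-u_k,-z_k,\alpha_k)
\end{equation}
for all $k\ge 0$. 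The base case $k=1$ then drops out from~\eqref{plan.onestep} with $\alpha_0=1$, because the $(1-\alpha_0)G_0$ term vanishes, and the general inductive step is immediate.

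To prove~\eqref{plan.onestep} I would first expand each generalized Bregman-like quantity in closed form using the updates $x_{k+1}=(1-\alpha_k)x_k+\alpha_k s_k$ and $u_{k+1}=(1-\alpha_k)u_k+\alpha_k z_k$. Substituting $x_{k+1}-x_k=\alpha_k(s_k-x_k)$ into $D_f(x_{k+1},x_k)$ and $u_k-u_{k+1}=\alpha_k(u_k-z_k)$ into $D_{h^*}(-u_{k+1},-u_k)$ gives
\begin{align*}
\D_{f,h}(x_k,s_k,\alpha_k) &= f(x_{k+1})+h(x_{k+1}) - f(x_k) - (1-\alpha_k)h(x_k) - \alpha_k h(s_k) - \alpha_k\ip{z_k}{s_k-x_k},\\
\D_{h^*,\tilde f^*}(-u_k,-z_k,\alpha_k) &= h^*(-u_{k+1}) + f^*(u_{k+1}) - h^*(-u_k) - (1-\alpha_k)f^*(u_k) - \alpha_k f^*(z_k) - \alpha_k\ip{u_k-z_k}{s_k}.
\end{align*}

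Next I would add these two equations and compare with the right-hand side of~\eqref{plan.onestep}. After collecting the $G_{k+1}$ and $(1-\alpha_k)G_k$ contributions, what remains to verify is that the following residual quantity vanishes:
\[
f(x_k)+f^*(z_k) - \ip{z_k}{x_k} \; + \; h(s_k)+h^*(-u_k)+\ip{u_k}{s_k}.
\]
But this is exactly the sum of the Fenchel--Young gaps at the two oracle evaluations performed at step~$k$: the identity $z_k=\partial f(x_k)$ forces $f(x_k)+f^*(z_k)=\ip{z_k}{x_k}$, while $s_k=\partial h^*(-u_k)$ forces $h(s_k)+h^*(-u_k)=-\ip{u_k}{s_k}$. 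So the residual is zero and~\eqref{plan.onestep} holds.

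The main obstacle is purely bookkeeping: keeping track of the signs in $\D_{h^*,\tilde f^*}$ under the substitution $v=-u_k$, $-z=-z_k$, and recognizing that the two cross terms $-\alpha_k\ip{z_k}{s_k-x_k}$ and $-\alpha_k\ip{u_k-z_k}{s_k}$ combine precisely with the two Fenchel--Young equalities above. Once this is set up, the induction is mechanical and parallels the proof of Theorem~\ref{thm.main}, but in a way that treats the primal and dual iterates on completely equal footing, which is the whole point of Algorithm~\ref{algo.hybrid}.
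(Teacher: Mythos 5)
Your proof is correct and follows essentially the same route as the paper's: induction on $k$, where the inductive step amounts to the one-step identity $G_{k+1}=(1-\alpha_k)G_k+\D_{f,h}(x_k,s_k,\alpha_k)+\D_{h^*,\tilde f^*}(-u_k,-z_k,\alpha_k)$, verified by expanding the generalized Bregman terms and cancelling via the two Fenchel--Young equalities at $z_k=\partial f(x_k)$ and $s_k=\partial h^*(-u_k)$ (the paper packages these as its equations \eqref{eq.pd.ind} and \eqref{eq.pd.ind.2}). The only cosmetic difference is that the paper verifies the $k=1$ base case by a direct computation that never writes down $h(x_0)$ or $f^*(u_0)$, whereas deducing it from the one-step identity with $\alpha_0=1$ implicitly uses the convention $0\cdot(+\infty)=0$ when $x_0\notin\dom(h)$ or $u_0\notin\dom(f^*)$.
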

\begin{proof}
We proceed by induction.  For $k=1$ we have $x_1 = s_0 = \partial h^*(-u_0)$ and $u_1 = z_0 = \partial f(x_0)$ because $\alpha_0 = 1$.  Thus
\begin{align*}
f&(x_1) + h(x_1) + f^*(u_1) + h^*(-u_1) \\
&= f(x_1) - \ip{u_0}{x_1}- h^*(-u_0)  +\ip{u_1}{x_0} - f(x_0)+ h^*(-u_1)\\
&= f(x_1) - f(x_0)-\ip{u_1}{x_1-x_0} + h^*(-u_1) - h^*(-u_0)- \ip{u_0-u_1}{x_1}\\
&= f(x_1) - f(x_0)-\ip{\partial f(x_0)}{x_1-x_0} + h^*(-u_1) - h^*(-u_0)- \ip{u_0-u_1}{\partial h^*(-u_0)}\\
& = \D_{f,h}(x_0,s_0,1) + \D_{h^*,\tilde f^*}(-u_0,-z_0,1).
\end{align*}
Hence~\eqref{eq.hybrid} holds for $k=1$ since $\mu^1_0 = 1$ and $\alpha_0 = 1$.

\medskip

Suppose~\eqref{eq.hybrid} holds for $k\ge 1$.  Since $z_k = \partial f(x_k)$ and 
$s_k=\partial h^*(-u_k)$ and $(x_{k+1},u_{k+1}) = (1-\alpha_k)(x_k,u_k) + \alpha_k (s_k,z_k),$ it follows that
\begin{equation}\label{eq.pd.ind}
f(x_k) + h(s_k) + f^*(z_k) + h^*(-u_k) = \ip{z_k}{x_k} - \ip{u_k}{s_k},
\end{equation}
and 
\begin{multline}\label{eq.pd.ind.2}
f(x_{k+1}) - f(x_k) + h^*(-u_{k+1}) - h^*(-u_k) -
 D_f(x_{k+1},x_k) - D_{h^*}(-u_{k+1},-u_k) \\= - \alpha_k(\ip{z_k}{x_k} - \ip{u_k}{s_k}). 
\end{multline}
Next, adding up $(1-\alpha_k)$ times~\eqref{eq.hybrid}
plus $\alpha_k$ times~\eqref{eq.pd.ind} plus~\eqref{eq.pd.ind.2}, and using~\eqref{eq.lm} and $(x_{k+1},u_{k+1}) = (1-\alpha_k)(x_k,u_k) + \alpha_k (s_k,z_k)$
we get
\begin{multline*}
f(x_{k+1}) + h(x_{k+1}) + f^*(u_{k+1}) + h^*(-u_{k+1}) - \D_{f,h}(x_k,s_k,\alpha_k) - \D_{h^*,\tilde f^*}(-u_k,-z_k,\alpha_k)\\
=
\sum_{i=0}^{k-1} 
\mu^{k+1}_i(\D_{f,h}(x_{i},s_i,\alpha_i) + \D_{h^*,\tilde f^*}(-u_{i},-z_i,\alpha_i)).
\end{multline*}
Since $\mu^{k+1}_k = 1$, the previous equation can be rewritten as
\begin{multline*}
f(x_{k+1}) + h(x_{k+1}) + f^*(u_{k+1}) +
h^*(-u_{k+1}) \\
=
\sum_{i=0}^{k} 
\mu^{k+1}_i(\D_{f,h}(x_{i},s_i,\alpha_i) + \D_{h^*,\tilde f^*}(-u_{i},-z_i,\alpha_i)
).
\end{multline*}
Therefore~\eqref{eq.hybrid} holds for $k+1$ as well.  
\end{proof}

\begin{corollary}\label{corol.thm.main.3} 
 Let $(x_k, u_k, s_k, z_k),\; k=0,1,2,\dots$ be the sequence of iterates generated by
Algorithm~\ref{algo.hybrid}. If $\alpha_0=1$ then for $k=1,2,\dots$
\[
f(x_k) + h(x_k) + f^*(u_k) + h^*(-u_k) \le \sum_{i=0}^{k-1} 
\mu^k_i(D_f(x_{i+1},x_i) + D_{h^*}(-u_{i+1},-u_{i})).
\]
\end{corollary}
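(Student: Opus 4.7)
The plan is to derive this corollary directly from Theorem~\ref{thm.main.3} by applying the two convexity-based upper bounds on $\D_{f,h}$ and $\D_{h^*,\tilde f^*}$ that were already recorded in the lead-up to Theorems~\ref{thm.main} and~\ref{thm.main.3}. Since Theorem~\ref{thm.main.3} establishes an \emph{equality} expressing $f(x_k) + h(x_k) + f^*(u_k) + h^*(-u_k)$ as a $\mu^k_i$-weighted sum of $\D_{f,h}(x_i,s_i,\alpha_i) + \D_{h^*,\tilde f^*}(-u_i,-z_i,\alpha_i)$, the corollary is simply a matter of bounding each summand from above term by term.

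More precisely, I would invoke the inequality
\[
\D_{f,h}(x_i,s_i,\alpha_i) \le D_f((1-\alpha_i) x_i + \alpha_i s_i, x_i) = D_f(x_{i+1},x_i),
\]
which was noted immediately before the statement of Theorem~\ref{thm.main} as a consequence of the convexity of $h$, combined with the update rule $x_{i+1} = (1-\alpha_i)x_i + \alpha_i s_i$ from Algorithm~\ref{algo.hybrid}. Symmetrically, from the analogous remark preceding Theorem~\ref{thm.main.3} (the convexity of $\tilde f^*$, equivalently of $f^*$) together with $-u_{i+1} = (1-\alpha_i)(-u_i) + \alpha_i(-z_i)$, I would write
\[
\D_{h^*,\tilde f^*}(-u_i,-z_i,\alpha_i) \le D_{h^*}((1-\alpha_i)(-u_i) + \alpha_i(-z_i), -u_i) = D_{h^*}(-u_{i+1},-u_i).
\]

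Finally, since $\alpha_0 = 1$ forces $\mu^k_i \ge 0$ for all $i = 0,\dots,k-1$ (by the recursion~\eqref{eq.lm}), taking the $\mu^k_i$-weighted sum of these two inequalities and substituting into the equality of Theorem~\ref{thm.main.3} yields the claimed bound. There is no real obstacle here: the corollary is essentially a packaging step, and the only thing worth verifying carefully is the sign of the weights $\mu^k_i$ so that the inequalities can be added without flipping direction.
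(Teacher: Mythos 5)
Your proposal is correct and is exactly the argument the paper intends: the corollary follows from the equality in Theorem~\ref{thm.main.3} by applying the convexity bounds $\D_{f,h}(x_i,s_i,\alpha_i)\le D_f(x_{i+1},x_i)$ and $\D_{h^*,\tilde f^*}(-u_i,-z_i,\alpha_i)\le D_{h^*}(-u_{i+1},-u_i)$ termwise, precisely mirroring the paper's proof of Corollary~\ref{corol.thm.main}. One tiny nitpick: the nonnegativity of the weights $\mu^k_i$ follows from $\alpha_j\in[0,1]$ for all $j$ (each $\mu^k_i$ is a product of factors $(1-\alpha_j)$), not from the condition $\alpha_0=1$, which is only needed for the equality of Theorem~\ref{thm.main.3} itself.
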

\section{Convergence results}
\label{sec.convergence}
Once again suppose that $X=Y, \; A:X\rightarrow X$ is the identity mapping and suppose that the tuple $(X,Y,A,f,h)$ satisfies Assumption~\ref{assum.blanket}.
We next leverage Corollary~\ref{corol.thm.main}, Corollary~\ref{thm.main.2}, and Corollary~\ref{corol.thm.main.3} to obtain some convergence results for Algorithm~\ref{algo.gcg}, Algorithm~\ref{algo.gmd}, and Algorithm~\ref{algo.hybrid} applied to~\eqref{eq.primal} and~\eqref{eq.dual}.

Corollary~\ref{corol.thm.main} readily implies that if $\alpha_0 = 1$ then the sequence of iterates $(x_k,u_k,s_k),\; k=0,1,2,\dots$ generated by Algorithm~\ref{algo.gcg} satisfies
\begin{equation}\label{eq.gap.gcg}
f(x_k) + g(x_k) + f^*(\hat u_k) + h^*(-\hat u_k) \le \gcggap_k,
\end{equation}
where
\begin{equation}\label{eq.hatu}
\hat u_k = \sum_{i=0}^{k-1} \lambda^k_{i} u_i \; \text{ or } \; \hat u_k = \argmin_{u_0,\dots,u_{k-1}} \{f^*(u_i)+h^*(-u_i)\},
\end{equation}
and $\gcggap_k, \; k=1,2,\dots$ is defined via $\gcggap_1 = D_f(s_0,x_0)$ and %the following recursion
\begin{equation}\label{eq.gcggap}
\gcggap_{k+1} = (1-\alpha_k)\gcggap_k 
+ D_{f}((1-\alpha_k)x_{k}+\alpha_k s_k,x_k), 
%+ \D_{f,h}(x_{k},s_k,\alpha_k), 
\; k=1,2,\dots.
\end{equation}
Similarly, Corollary~\ref{thm.main.2} implies that if $\alpha_0 = 1$ then the sequence of iterates $(v_k,y_k,z_k),\; k=0,1,2,\dots$ generated by Algorithm~\ref{algo.gmd} satisfies
\begin{equation}\label{eq.gap.gmd}
f(\hat y_k) + g(\hat y_k) + f^*(-v_k) + h^*(v_k) \le \gmdgap_k,
\end{equation}
where
\begin{equation}\label{eq.haty}
\hat y_k = \sum_{i=0}^{k-1} \lambda^k_{i} y_i \; \text{ or } \; \hat y_k = \argmin_{y_0,\dots,y_{k-1}} \{f(y_i)+h(y_i)\},
\end{equation}
and $\gmdgap_k, \; k=1,2,\dots$ is defined via $\gmdgap_1 = D_{h^*}(-z_0,v_0)$ and %the following recursion
\begin{equation}\label{eq.gmdgap}
\gmdgap_{k+1} = (1-\alpha_k)\gmdgap_k 
+ D_{h^*}((1-\alpha_k)v_{k}-\alpha_k z_k,v_k), 
%+ \D_{h^*,f^*}(v_{k},z_k,\alpha_k), 
\; k=1,2,\dots.
\end{equation}

On the other hand, Theorem~\ref{thm.main.3} implies that if $\alpha_0 = 1$ then the sequence of iterates $(x_k,u_k,s_k,z_k),\; k=0,1,2,\dots$ generated by Algorithm~\ref{algo.hybrid} satisfies
\begin{equation}\label{eq.gap.hybrid}
f(x_k) + h(x_k) + f^*(u_k) + h^*(-u_k) = \Hgap_k,
\end{equation}
where $\Hgap_1=D_f(s_0,x_0) + D_{h^*}(-z_0,-u_0)$ and
\begin{multline}\label{eq.hybridgap}
\Hgap_{k+1} = (1-\alpha_k)\Hgap_k 
+ D_{f}((1-\alpha_k)x_{k}+\alpha_ks_k,x_k)  \\ + D_{h^*}(-(1-\alpha_k)u_{k}-\alpha_kz_k,-u_k),
%+ \D_{f,h}(x_{k},s_k,\alpha_k) + \D_{h^*,f^*}(-u_{k},-z_k,\alpha_k),
 \; \;  k=1,2,\dots.%\notag
\end{multline}

As the propositions below formally show, the above observations yield familiar $\Oh(1/k)$ convergence results for the popular  step size $\alpha_k = 2/(k+2), \; k=0,1,\dots$ provided a suitable {\em relative quadratic curvature} condition holds.

\begin{definition} {\em We say that $f$ has {\em quadratic curvature relative} to $h$ if there exists a finite constant $C$ such that for all $x\in\dom(\partial f)$  and $v\in \dom(\partial h^*)$  the following inequality holds for 
$s := \partial h^*(v)$ 
\begin{equation}\label{eq.quad.curv}
D_f(x+\alpha(s-x),x) \le \frac{C \alpha^2}{2} \text{ for all } \alpha \in [0,1].
\end{equation}
}
\end{definition}

This new concept of relative quadratic curvature condition is a generalization of the  curvature constant introduced by Jaggi~\cite{Jagg13}.  Indeed, consider a problem of the form~\eqref{eq.problem.simple} where $Q$ is compact and convex, $f$ is differentiable on $Q$, and a linear oracle for $Q$ is available.  In this context, Jaggi~\cite{Jagg13} defines the curvature constant of $f$ on $Q$ as follows
\[
C_{f,Q} = \sup_{x,s\in Q \atop \alpha\in (0,1]} \frac{D_f(x+\alpha(s-x),x)}{\alpha^2/2}.
\]
Observe that for $h = \delta_{Q}$, inequality~\eqref{eq.quad.curv} holds if $C \ge C_{f,Q}$.  We note that the smallest constant $C$ such that~\eqref{eq.quad.curv} holds for all $x\in Q$ and $s = \partial h^*(v), \; v\in \dom(\partial h^*)$ could be potentially smaller.  

The above concept of relative quadratic curvature is inspired by the concepts of relative smoothness and relative continuity introduced in~\cite{BausBT16,LuFN18,Lu19,Tebo18}.

\begin{proposition}\label{prop.gcg}
Suppose $f$ has quadratic curvature relative to $h$ with constant $C$.  If $\alpha_k = 2/(k+2), \; k=0,1,\dots$ then the sequence of iterates $(x_k,u_k,s_k),\; k=1,2,\dots$ generated by Algorithm~\ref{algo.gcg} satisfies
\begin{equation}\label{eq.conv}
f(x_k) + h(x_k) + f^*(\hat u_k) + h^*(-\hat u_k) \le \frac{2C}{k+2}
\end{equation}
where $\hat u_k$ is as in~\eqref{eq.hatu}.
\end{proposition}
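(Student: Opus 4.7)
The plan is to combine the gap bound~\eqref{eq.gap.gcg}–\eqref{eq.gcggap} derived in Section~\ref{sec.duality} with the relative quadratic curvature assumption, and then establish the estimate $\gcggap_k \le 2C/(k+2)$ by a direct induction on $k$. Since $\hat u_k$ satisfies $f(x_k)+h(x_k)+f^*(\hat u_k)+h^*(-\hat u_k)\le \gcggap_k$, this inductive bound yields~\eqref{eq.conv} immediately.

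First I would record the one-step consequence of the hypothesis: because $s_k=\partial h^*(-u_k)$ with $u_k\in\dom(\partial f)$, the curvature inequality~\eqref{eq.quad.curv} applied to $x_k$ and $s_k$ gives
\[
D_f\bigl((1-\alpha_k)x_k+\alpha_k s_k,x_k\bigr)\le \frac{C\alpha_k^2}{2}
=\frac{2C}{(k+2)^2}
\]
for the choice $\alpha_k=2/(k+2)$. In particular, for $k=0$ this yields $\gcggap_1=D_f(s_0,x_0)\le C/2\le 2C/3$, which is the base case.

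For the inductive step, assuming $\gcggap_k\le 2C/(k+2)$, the recursion~\eqref{eq.gcggap} together with the previous display gives
\[
\gcggap_{k+1}\le \Bigl(1-\frac{2}{k+2}\Bigr)\cdot\frac{2C}{k+2}+\frac{2C}{(k+2)^2}
=\frac{2C(k+1)}{(k+2)^2}.
\]
The remaining task is the elementary inequality $\frac{k+1}{(k+2)^2}\le \frac{1}{k+3}$, which is equivalent to $(k+1)(k+3)\le (k+2)^2$, i.e.\ $k^2+4k+3\le k^2+4k+4$. This closes the induction and delivers the claim.

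There is essentially no obstacle here: the relative curvature assumption is tailored precisely to bound the single new term appearing in~\eqref{eq.gcggap}, and the step-size schedule $\alpha_k=2/(k+2)$ was engineered so that the resulting affine recursion for $\gcggap_k$ admits the closed-form envelope $2C/(k+2)$. The only bookkeeping item worth double-checking is the base case, where $\alpha_0=1$ forces the first $D_f$ term to be controlled by $C/2$ (still compatible with the $k=1$ target $2C/3$); everything else is a routine algebraic manipulation.
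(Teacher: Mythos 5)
Your proof is correct and follows essentially the same route as the paper: reduce to the bound $\gcggap_k \le 2C/(k+2)$ via~\eqref{eq.gap.gcg}, verify the base case from $\alpha_0=1$ and~\eqref{eq.quad.curv}, and close the induction with the recursion~\eqref{eq.gcggap} and the elementary inequality $(k+1)(k+3)\le (k+2)^2$. The only nitpick is that the curvature hypothesis is invoked with $x_k\in\dom(\partial f)$ and $-u_k\in\dom(\partial h^*)$ (not ``$u_k\in\dom(\partial f)$'' as you wrote), but this is a harmless slip.
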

\begin{proof} 
By~\eqref{eq.gap.gcg}, it suffices to show that for $k=1,2,\dots$
\begin{equation}\label{eq.gcggap.k}
\gcggap_k \le \frac{2C}{k+2}.
\end{equation}
We proceed by induction.  For $k=1$ inequality~\eqref{eq.quad.curv} and $\alpha_0=1$ imply that
\[
\gcggap_0 = D_f(x_1,x_0) = D_f(x_0+(s_0-x_0),x_0) \le \frac{C}{2} \le \frac{2C}{3}.
\]
Hence~\eqref{eq.gcggap.k} holds for $k=1$.
Suppose~\eqref{eq.gcggap.k} holds for $k\ge 1$.  Then~\eqref{eq.gcggap},~\eqref{eq.quad.curv}, and $\alpha_k = 2/(k+2)$ imply that 
\[
\gcggap_{k+1} \le \frac{k}{k+2}\cdot\frac{2C}{k+2} + \frac{2C}{(k+2)^2}
= \frac{2C(k+1)}{(k+2)^2}
\le \frac{2C}{k+3}.
\]
Therefore~\eqref{eq.gcggap.k} holds for $k+1$ as well.

\end{proof}

%By weak duality it readily follows that when Proposition~\ref{prop.gcg} applies, the optimal values of~\eqref{eq.primal} and~\eqref{eq.dual} are the same and both $f(x_k) + g(x_k)$ and $-f^*(\hat u_k) - h^*(-\hat u_k)$ are within $2C/(k+2)$ of that optimal value.

Again the duality between~\eqref{eq.primal} and~\eqref{eq.dual} and between Algorithm~\ref{algo.gcg} and Algorithm~\ref{algo.gmd} automatically yield the following corollary of Proposition~\ref{prop.gcg}.  Recall that $\tilde f$ is defined via $\tilde f(y)=f(-y)$.
% We will need the dual condition that $\tilde h^*$ has quadratic curvature relative to $f^*$.
%  This condition can be phrased as the following dual counterpart of~\eqref{eq.quad.curv}: There exists a finite constant $C$ such that for all $v\in \dom(h^*)$ and $y\in \dom(\partial f)$ the following inequality holds for
%$z = \partial f(y)$  \[D_{h^*}(v+\alpha(-z-v)) \le \frac{C\alpha^2}{2}\;\text{for all} \; \alpha\in[0,1].\] 
\begin{corollary}\label{prop.gmd}
Suppose $ h^*$ has quadratic curvature relative to $\tilde f^*$ with constant $C^*$.  If $\alpha_k = 2/(k+2), \; k=0,1,\dots$ then the sequence of iterates $(v_k,y_k,z_k),\; k=0,1,2,\dots$ generated by Algorithm~\ref{algo.gmd} satisfies
\[
f(\hat y_k) + h(\hat y_k) + f^*(-v_k) + h^*(v_k) \le \frac{2C^*}{k+2}
\]
where $\hat y_k$ is as in~\eqref{eq.haty}.
%In particular the optimal values of~\eqref{eq.primal} and~\eqref{eq.dual} are the same and both $f(\hat x_k) + g(\hat x_k)$ and $-f^*(u_k) - h^*(-u_k)$ are within $2C^*/(k+2)$ of that optimal value.
\end{corollary}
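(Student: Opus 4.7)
The plan is to exploit the duality correspondence between Algorithm~\ref{algo.gcg} and Algorithm~\ref{algo.gmd} developed in Section~\ref{sec.duality} and reduce the statement to Proposition~\ref{prop.gcg} applied to the pair $(h^*,\tilde f^*)$ in place of $(f,h)$. Since $f$ and $h$ are closed and convex, Fenchel biconjugation gives $(h^*)^* = h$ and $(\tilde f^*)^* = \tilde f$, together with $\tilde f(y)=f(-y)$ and $\tilde f^*(v)=f^*(-v)$. The hypothesis that $h^*$ has quadratic curvature relative to $\tilde f^*$ with constant $C^*$ is precisely the hypothesis of Proposition~\ref{prop.gcg} once the roles of $f$ and $h$ are played by $h^*$ and $\tilde f^*$ respectively.

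First I would make the iterate correspondence explicit, mirroring the computation already carried out by the author in Section~\ref{sec.duality} for the opposite direction. Running Algorithm~\ref{algo.gmd} on $(f,h)$ from $v_0\in\dom(\partial h^*)$ produces the same sequence as running Algorithm~\ref{algo.gcg} on $(h^*,\tilde f^*)$ from $\bar x_0=v_0$, under the identification $(\bar x_k,\bar u_k,\bar s_k)=(v_k,y_k,-z_k)$. To verify this one uses $\partial \tilde f(w)=-\partial f(-w)$ to check that $\bar u_k = \partial h^*(\bar x_k)=\partial h^*(v_k)=y_k$ and $\bar s_k = \partial \tilde f(-\bar u_k)=-\partial f(y_k)=-z_k$; the convex-combination updates then coincide automatically.

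Having established this correspondence, I would invoke Proposition~\ref{prop.gcg} on $(h^*,\tilde f^*)$ with step sizes $\alpha_k = 2/(k+2)$, obtaining
\[
h^*(\bar x_k) + \tilde f^*(\bar x_k) + (h^*)^*(\hat{\bar u}_k) + (\tilde f^*)^*(-\hat{\bar u}_k) \le \frac{2C^*}{k+2},
\]
where $\hat{\bar u}_k$ is defined from $\bar u_0,\dots,\bar u_{k-1}$ via~\eqref{eq.hatu}. Substituting $\bar x_k=v_k$ and $\bar u_i=y_i$ shows that $\hat{\bar u}_k$ coincides with $\hat y_k$ in the sense of~\eqref{eq.haty} (either the convex combination or the argmin choice carries over index by index), and the biconjugate identities $(h^*)^*=h$, $(\tilde f^*)^*(-\hat y_k)=\tilde f(-\hat y_k)=f(\hat y_k)$, and $\tilde f^*(v_k)=f^*(-v_k)$ convert the displayed bound into exactly the claimed inequality.

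There is no substantive obstacle here; the entire argument is duality transport. The only place that demands care is the bookkeeping of signs and conjugates in the iterate matching and in the relabeling of the convergence bound, but once the identification $(\bar x_k,\bar u_k,\bar s_k)=(v_k,y_k,-z_k)$ is pinned down, everything else is a direct translation.
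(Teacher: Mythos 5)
Your proposal is correct and is exactly the argument the paper intends: the paper gives no explicit proof of Corollary~\ref{prop.gmd}, stating only that the duality between Algorithm~\ref{algo.gcg} and Algorithm~\ref{algo.gmd} ``automatically yields'' it from Proposition~\ref{prop.gcg}, and your identification $(\bar x_k,\bar u_k,\bar s_k)=(v_k,y_k,-z_k)$ together with the biconjugation and sign bookkeeping is precisely the transport that makes this automatic step rigorous. You in fact supply more detail than the paper does, and the details check out.
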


The same inductive argument underlying the proof of Proposition~\ref{prop.gcg} together with~\eqref{eq.gap.hybrid} and~\eqref{eq.hybridgap} yields the following analogous result for Algorithm~\ref{algo.hybrid}.

\begin{proposition}\label{prop.hybrid}
Suppose $f$ has quadratic curvature relative to $h$  with constant $C$ and $ h^*$ has quadratic curvature relative to  $\tilde f^*$ with constant $C^*$.  If $\alpha_k =2/(k+2), \; k=0,1,\dots$ then the sequence of iterates $(x_k,u_k,s_k,z_k),\; k=0,1,2,\dots$ generated by Algorithm~\ref{algo.hybrid} satisfies
\[
f(x_k) + h(x_k) + f^*(u_k) + h^*(-u_k) \le \frac{2(C+C^*)}{k+2}.
\]
\end{proposition}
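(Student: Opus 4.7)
The plan is to mimic the inductive argument in the proof of Proposition~\ref{prop.gcg}, now applied to the hybrid recursion~\eqref{eq.hybridgap} for $\Hgap_k$ instead of the recursion for $\gcggap_k$. Since Theorem~\ref{thm.main.3} together with~\eqref{eq.gap.hybrid} expresses the primal-dual gap appearing on the left-hand side of the proposition exactly as $\Hgap_k$, the proposition reduces to establishing $\Hgap_k \le 2(C+C^*)/(k+2)$ for every $k\ge 1$.

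First I would bound separately the two Bregman increments appearing in~\eqref{eq.hybridgap}. For the $f$-term, taking $s = s_k = \partial h^*(-u_k)$ and observing that $(1-\alpha_k)x_k + \alpha_k s_k = x_k + \alpha_k(s_k - x_k)$, the relative quadratic curvature of $f$ with respect to $h$ immediately gives $D_f((1-\alpha_k)x_k + \alpha_k s_k, x_k) \le C\alpha_k^2/2$. For the $h^*$-term, I would first unpack that $\tilde f^*(w) = f^*(-w)$, so $\partial \tilde f^{**}(-x_k) = -\partial f(x_k) = -z_k$. Applying the relative quadratic curvature of $h^*$ with respect to $\tilde f^*$ at $v = -u_k$ with $s = -z_k$ then yields $D_{h^*}(-(1-\alpha_k)u_k - \alpha_k z_k, -u_k) \le C^*\alpha_k^2/2$, since $v + \alpha_k(s - v) = -(1-\alpha_k)u_k - \alpha_k z_k$.

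Substituting these two bounds into~\eqref{eq.hybridgap} reduces the problem to the one-step recursion
\[
\Hgap_{k+1} \le (1-\alpha_k)\Hgap_k + \frac{(C+C^*)\alpha_k^2}{2},
\]
with base case $\Hgap_1 = D_f(s_0, x_0) + D_{h^*}(-z_0, -u_0) \le (C+C^*)/2 \le 2(C+C^*)/3$, obtained from $\alpha_0 = 1$ and the two curvature inequalities at $\alpha = 1$. An elementary induction on $k$, using $\alpha_k = 2/(k+2)$ and the arithmetic inequality $(k+1)/(k+2)^2 \le 1/(k+3)$, then propagates $\Hgap_k \le 2(C+C^*)/(k+2)$ exactly as in the proof of Proposition~\ref{prop.gcg}. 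The only real checkpoint is the identification of $-z_k$ with $\partial \tilde f^{**}(-x_k)$ so that the relative quadratic curvature of $h^*$ applies verbatim to the hybrid iterate; once that is in place the induction is a direct replay of the conditional subgradient argument, and the additive structure of~\eqref{eq.hybridgap} makes the constants $C$ and $C^*$ combine additively in the final bound.
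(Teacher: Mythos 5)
Your proof is correct and is exactly the argument the paper intends: it reduces the claim via~\eqref{eq.gap.hybrid} to bounding $\Hgap_k$, verifies that the two curvature hypotheses (with the identification $-z_k=\partial \tilde f^{**}(-x_k)$) bound the two Bregman increments in~\eqref{eq.hybridgap} by $C\alpha_k^2/2$ and $C^*\alpha_k^2/2$, and then replays the induction of Proposition~\ref{prop.gcg} with $C+C^*$ in place of $C$. The paper omits this proof precisely because it is this same inductive argument, so there is nothing to add beyond noting that the relation in~\eqref{eq.gap.hybrid} is really an inequality $\le$ (the $\D$-terms of Theorem~\ref{thm.main.3} are only upper-bounded by the $D_f$ and $D_{h^*}$ terms defining $\Hgap_k$), which is all your argument needs.
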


The identity~\eqref{eq.gcggap}  suggests the following {\em line-search} procedure to select the step size $\alpha_k$ in Algorithm~\ref{algo.gcg}:
\begin{equation}\label{eq.ls}
\alpha_k := \argmin_{\alpha\in[0,1]}  \left\{(1-\alpha) \gcggap_k + D_f(x_k+\alpha(s_k-x_k),x_k)\right\}.
\end{equation}
Likewise, the identity~\eqref{eq.gmdgap} suggests  the following {\em line-search} procedure to select the step size $\alpha_k$ in Algorithm~\ref{algo.gmd}:
\begin{align}\label{eq.ls.gmd}
\alpha_k %&:= \argmin_{\alpha\in[0,1]}  \left\{(1-\alpha) \gmdgap_k + D_h(x_{k},x_{k+1})\right\} \notag\\
&:= \argmin_{\alpha\in[0,1]}  \left\{(1-\alpha) \gmdgap_k + D_{h^*}(v_k-\alpha(z_k+v_k),v_k)\right\}.
\end{align}
Similarly, the identity~\eqref{eq.hybridgap} suggests  the following {\em line-search} procedure to select the step size $\alpha_k$ in Algorithm~\ref{algo.hybrid}:
\begin{align}\label{eq.ls.hybrid}
\alpha_k := \argmin_{\alpha\in[0,1]} \{(1-\alpha)\Hgap_k &+ D_f(x_k + \alpha(s_k-x_k),x_k) \notag \\ &+ D_{h^*}(-u_k - \alpha(z_k-u_k),-u_k) \}. 
\end{align}

The above line-search procedures are computable via  binary search provided Assumption~\ref{assum.bregman} holds.  These line-search procedures enable us to prove the convergence of Algorithm~\ref{algo.gcg}, Algorithm~\ref{algo.gmd}, and Algorithm~\ref{algo.hybrid} under the following more general $\gamma$-curvature condition.  We should note that under the stronger assumption

\begin{definition} {\em Let $\gamma > 1$. We  say that $f$ has {\em $\gamma$-curvature relative} to $h$ if there exists a finite constant $C$ such that for all 
$x\in\dom(f)$  and $v\in \dom(\partial h^*)$  the following inequality holds for $s = \partial h^*(v)$ 
\begin{equation}\label{eq.gamma.curv}
D_f(x+\alpha(s-x),x) \le \frac{C \alpha^\gamma}{\gamma} \text{ for all } \alpha \in [0,1].
\end{equation}
}
\end{definition}

We have the following interesting generalization of Proposition~\ref{prop.gcg}.

\begin{theorem}\label{thm.gcg}
Suppose $\gamma>1$ is such that $f$ has $\gamma$-curvature relative to $h$ with constant $C$.  If $\alpha_0 = 1$ and $\alpha_k \in [0,1], \; k=1,2,\dots$ is chosen via~\eqref{eq.ls} then the sequence of iterates $(x_k,u_k,s_k),\; k=0,1,2,\dots$ generated by Algorithm~\ref{algo.gcg} satisfies
\begin{equation}\label{eq.gcg.conv}
f(x_k) + g(x_k) + f^*(\hat u_k) + h^*(-\hat u_k) \le C\left(\frac{\gamma }{k+\gamma}\right)^{\gamma-1},
\end{equation}
where $\hat u_k$ is as in~\eqref{eq.hatu}.
\end{theorem}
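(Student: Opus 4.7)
The plan is to combine the bound~\eqref{eq.gap.gcg} with an inductive estimate on the auxiliary sequence $\gcggap_k$ defined by~\eqref{eq.gcggap}. Since~\eqref{eq.gap.gcg} already dominates the duality gap by $\gcggap_k$, the theorem reduces to showing
\[
\gcggap_k \le C\left(\frac{\gamma}{k+\gamma}\right)^{\gamma-1}, \quad k=1,2,\dots,
\]
which I would prove by induction on $k$.

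For the base case $k=1$, the choice $\alpha_0=1$ together with the $\gamma$-curvature condition~\eqref{eq.gamma.curv} applied at $\alpha=1$ immediately gives $\gcggap_1 = D_f(s_0,x_0) \le C/\gamma$, and what is left to check reduces to the scalar inequality $\gamma^\gamma \ge (\gamma+1)^{\gamma-1}$. For the inductive step, the critical observation is that because $\alpha_k$ is selected by the line search~\eqref{eq.ls}, the recursion~\eqref{eq.gcggap} yields $\gcggap_{k+1} \le (1-\alpha)\gcggap_k + D_f(x_k+\alpha(s_k-x_k),x_k)$ for \emph{every} $\alpha\in[0,1]$. Combining this inequality with~\eqref{eq.gamma.curv} and the inductive hypothesis, and then plugging in the trial value $\alpha = \gamma/(k+\gamma)$, yields
\[
\gcggap_{k+1} \le \frac{k}{k+\gamma}\cdot C\left(\frac{\gamma}{k+\gamma}\right)^{\gamma-1} + \frac{C}{\gamma}\left(\frac{\gamma}{k+\gamma}\right)^{\gamma} = \frac{C\gamma^{\gamma-1}(k+1)}{(k+\gamma)^{\gamma}}.
\]

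What remains is the purely scalar inequality $(k+1)(k+1+\gamma)^{\gamma-1} \le (k+\gamma)^{\gamma}$, which rewrites as $(1-1/m)^{\gamma} \ge 1 - \gamma/m$ with $m = k+1+\gamma$ and is an immediate instance of Bernoulli's inequality $(1+x)^{r} \ge 1+rx$ for $r=\gamma \ge 1$ and $x=-1/m$. The same Bernoulli inequality applied with $m = \gamma+1$ also dispatches the base-case inequality $\gamma^\gamma \ge (\gamma+1)^{\gamma-1}$, so both the base and the inductive step close through a single scalar fact.

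The main obstacle I anticipate is not conceptual once the right relaxation of the line search is in hand; it is rather a matter of correctly guessing that the generalization of Jaggi's step size $2/(k+2)$ is the trial step $\gamma/(k+\gamma)$, and then recognizing that both scalar inequalities left over are governed uniformly by Bernoulli's inequality. Once these two ingredients are identified, the induction is routine, closes for every $\gamma > 1$, and recovers Proposition~\ref{prop.gcg} as the special case $\gamma=2$.
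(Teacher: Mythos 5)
Your proposal is correct and follows essentially the same route as the paper: reduce to the bound $\gcggap_k \le C\left(\frac{\gamma}{k+\gamma}\right)^{\gamma-1}$ via~\eqref{eq.gap.gcg}, induct on $k$, exploit that the line search~\eqref{eq.ls} dominates the choice $\alpha=\gamma/(k+\gamma)$, and close with the scalar inequality $(k+1)(k+1+\gamma)^{\gamma-1}\le (k+\gamma)^{\gamma}$. The only cosmetic difference is that you justify the scalar inequalities by Bernoulli's inequality while the paper invokes the weighted arithmetic--geometric mean inequality; these are interchangeable here, and your observation that the base case is just the $k=0$ instance of the same inequality is a nice tidying of the argument.
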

\begin{proof} 
By~\eqref{eq.gap.gcg} it suffices to show that for $k=1,2,\dots$
\begin{equation}\label{eq.gamma.seq}
\gcggap_k \le C\left(\frac{\gamma}{k+\gamma}\right)^{\gamma -1}.
\end{equation}
We prove~\eqref{eq.gamma.seq} by induction on $k$.   For $k=1$ we have
\begin{equation}\label{eq.k.equal.1}
\gcggap_1 = D_f(x_1,x_0) \le \frac{C}{\gamma} \le C\left(\frac{\gamma}{1+\gamma}\right)^{\gamma -1}.
\end{equation}
where the last step follows from the weighted arithmetic mean geometric mean inequality. Hence~\eqref{eq.gamma.seq} holds for $k=1$.

\medskip

Suppose~\eqref{eq.gamma.seq} holds for $k\ge 1$.  Then~\eqref{eq.gamma.curv} and~\eqref{eq.ls} implies that for all $\alpha \in [0,1]$ 
\[
\gcggap_{k+1} \le (1-\alpha)\gcggap_{k} + \frac{C\alpha^\gamma}{\gamma}
\]
In particular, for $\alpha = \gamma/(k+\gamma)$ we have
\begin{align*}
\gcggap_{k+1} &\le C\frac{k}{k+\gamma}  \left(
\frac{\gamma}{k+\gamma}\right)^{\gamma -1}   
+ \frac{C \gamma^\gamma}{\gamma(k+\gamma)^\gamma} 
\\& 
= \frac{C(k+1)\gamma^{\gamma-1}}{(k+\gamma)^\gamma} \\&
 \le C\left(\frac{\gamma}{k+1+\gamma}\right)^{\gamma -1 },
\end{align*}
where the last step follows from the inequality
\[
(k+1)(k+1+\gamma)^{\gamma-1} \le (k+\gamma)^\gamma,
\]
which in turn follows from the weighted arithmetic geometric mean inequality.
Therefore~\eqref{eq.gamma.seq} holds for $k+1$ as well.
\end{proof}

Once again, the duality between~\eqref{eq.primal} and~\eqref{eq.dual} and between Algorithm~\ref{algo.gcg} and Algorithm~\ref{algo.gmd} automatically yields the following dual counterpart of Theorem~\ref{thm.gcg}.

\begin{corollary}\label{thm.gmd}
Suppose $\gamma > 1$ is such that $h^*$ has $\gamma$-curvature relative to $\tilde f^*$ with constant $C^*$.  If $\alpha_0 = 1$ and $\alpha_k \in [0,1], \; k=1,2,\dots$ is chosen via~\eqref{eq.ls.gmd} then the sequence of iterates $(v_k,y_k,z_k),\; k=0,1,2,\dots$ generated by Algorithm~\ref{algo.gmd} satisfies
\[
f(\hat y_k) + g(\hat y_k) + f^*(-v_k) + h^*(v_k) \le C^*\left(\frac{\gamma }{k+\gamma}\right)^{\gamma-1}
\]
where $\hat y_k$ is as in~\eqref{eq.haty}.
%In particular the optimal values of~\eqref{eq.primal} and~\eqref{eq.dual} are the same and both $f(\hat x_k) + g(\hat x_k)$ and $-f^*(u_k) - h^*(-u_k)$ are within $C(\gamma /(k+\gamma))^{\gamma-1}$ of that optimal value.
\end{corollary}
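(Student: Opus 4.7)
The plan is to derive this corollary directly from Theorem~\ref{thm.gcg} applied to the dual problem~\eqref{eq.dual.again}, using the Bach duality correspondence recalled in Section~\ref{sec.duality}. Since $f,h$ are closed convex, so are $h^*$ and $\tilde f^*$, and the tuple $(X,X,I,h^*,\tilde f^*)$ satisfies Assumption~\ref{assum.blanket}. Thus Theorem~\ref{thm.gcg}, applied to Algorithm~\ref{algo.gcg} run on $(h^*,\tilde f^*)$ with iterates $(x_k',u_k',s_k')$ and line-search~\eqref{eq.ls}, yields under $\gamma$-curvature of $h^*$ relative to $\tilde f^*$ the bound
\[
h^*(x_k') + \tilde f^*(x_k') + h^{**}(\hat u_k') + \tilde f^{**}(-\hat u_k') \le C^*\left(\frac{\gamma}{k+\gamma}\right)^{\gamma-1}.
\]

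Next I would verify the identification of iterates. Initializing Algorithm~\ref{algo.gcg} on $(h^*,\tilde f^*)$ from $x_0'=v_0$, I claim inductively that $x_k'=v_k$, $u_k'=y_k$, and $s_k'=-z_k$. Indeed, $u_k' = \partial h^*(x_k') = \partial h^*(v_k) = y_k$, and using $\tilde f(y)=f(-y)$, $\tilde f^{**}=\tilde f$, so $s_k' = \partial \tilde f^{**}(-u_k') = \partial \tilde f(-y_k) = -\partial f(y_k) = -z_k$. Then $x_{k+1}' = (1-\alpha_k)v_k + \alpha_k(-z_k) = v_{k+1}$, matching Algorithm~\ref{algo.gmd}. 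Moreover, under this correspondence the line-search~\eqref{eq.ls} specializes to
\[
\alpha_k = \argmin_{\alpha \in [0,1]} \left\{(1-\alpha)\gcggap_k + D_{h^*}(v_k - \alpha(z_k+v_k), v_k)\right\},
\]
and the recursion~\eqref{eq.gcggap} for $\gcggap_k$ collapses to the recursion~\eqref{eq.gmdgap} defining $\gmdgap_k$, so~\eqref{eq.ls} matches~\eqref{eq.ls.gmd}. Hence the line-search step sizes for the two runs coincide.

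Finally I would translate the terms in the displayed bound back to primal objects. Since $f$ and $h$ are closed convex, $h^{**}=h$ and $\tilde f^{**} = \tilde f$; and a direct computation gives $\tilde f^*(v) = f^*(-v)$. Consequently $h^*(x_k') = h^*(v_k)$, $\tilde f^*(x_k') = f^*(-v_k)$, $h^{**}(\hat u_k') = h(\hat y_k)$, and $\tilde f^{**}(-\hat u_k') = \tilde f(-\hat y_k) = f(\hat y_k)$, so the inequality above becomes exactly the claim of Corollary~\ref{thm.gmd}.

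The only substantive issue is the sign and conjugation bookkeeping: one must carefully track that $\tilde f^{**}=\tilde f$ with the correct sign-flip in the subgradient, confirm that the appropriate primal-dual objects match across the duality, and check that the two line-search recurrences coincide term-by-term. Once these routine verifications are done, the result is an immediate translation of Theorem~\ref{thm.gcg}.
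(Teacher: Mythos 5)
Your proposal is correct and matches the paper's approach exactly: the paper proves nothing explicitly here, asserting that Corollary~\ref{thm.gmd} follows ``automatically'' from the duality between Algorithm~\ref{algo.gcg} and Algorithm~\ref{algo.gmd}, which is precisely the translation you carry out. Your identification $(x_k',u_k',s_k')=(v_k,y_k,-z_k)$, the matching of the line-search recurrences, and the conjugation bookkeeping ($h^{**}=h$, $\tilde f^{**}=\tilde f$, $\tilde f^*(v)=f^*(-v)$) all check out.
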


We also have the  following analogue of Theorem~\ref{thm.gcg} for Algorithm~\ref{algo.hybrid}.  We omit the proof of Theorem~\ref{thm.hybrid} since it is a straightforward extension of the proof of Theorem~\ref{thm.gcg}.

\begin{theorem}\label{thm.hybrid}
Suppose $\gamma>1$ is such that $f$ has $\gamma$-curvature relative to $h$  with constant $C$ and $h^*$ has $\gamma$-curvature relative to $\tilde f^*$ with constant $C^*$.  If $\alpha_k \in [0,1], \; k=0,1,\dots$ are chosen via~\eqref{eq.ls.hybrid}  then the sequence of iterates $x_k,u_k,\; k=0,1,2,\dots$ generated by Algorithm~\ref{algo.hybrid} satisfies
\[
f(x_k) + g(x_k) + f^*(u_k) + h^*(-u_k) \le (C+C^*)\left(\frac{\gamma }{k+\gamma}\right)^{\gamma-1}.
\]
\end{theorem}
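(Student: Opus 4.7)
The plan is to closely mirror the inductive proof of Theorem~\ref{thm.gcg}, applied to the hybrid gap sequence $\Hgap_k$ instead of $\gcggap_k$. By identity~\eqref{eq.gap.hybrid}, the theorem reduces to showing that
\[
\Hgap_k \le (C+C^*)\left(\frac{\gamma}{k+\gamma}\right)^{\gamma-1}
\quad \text{for } k=1,2,\dots,
\]
which I would prove by induction on $k$.

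For the base case $k=1$, I would use $\Hgap_1 = D_f(s_0,x_0) + D_{h^*}(-z_0,-u_0)$. Since $s_0 = \partial h^*(-u_0)$, the first Bregman term is bounded by $C/\gamma$ by the $\gamma$-curvature of $f$ relative to $h$ evaluated at $\alpha = 1$. For the second term, I would apply the $\gamma$-curvature of $h^*$ relative to $\tilde f^*$ in the roles $(f,h)\leftarrow(h^*,\tilde f^*)$, $x\leftarrow -u_0$, $s\leftarrow -z_0$, yielding the bound $C^*/\gamma$. Summing gives $\Hgap_1 \le (C+C^*)/\gamma$, which is at most $(C+C^*)(\gamma/(1+\gamma))^{\gamma-1}$ by the weighted arithmetic-geometric mean inequality, exactly as in~\eqref{eq.k.equal.1}.

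For the inductive step, the optimality of the line-search step size in~\eqref{eq.ls.hybrid} combined with the recursion~\eqref{eq.hybridgap} implies that for every $\alpha \in [0,1]$,
\[
\Hgap_{k+1} \le (1-\alpha)\Hgap_k + D_f\bigl(x_k+\alpha(s_k-x_k),x_k\bigr) + D_{h^*}\bigl(-u_k-\alpha(z_k-u_k),-u_k\bigr).
\]
The two $\gamma$-curvature hypotheses bound the Bregman terms by $C\alpha^\gamma/\gamma$ and $C^*\alpha^\gamma/\gamma$ respectively, so
\[
\Hgap_{k+1} \le (1-\alpha)\Hgap_k + \frac{(C+C^*)\alpha^\gamma}{\gamma}.
\]
Substituting $\alpha = \gamma/(k+\gamma)$ and the inductive hypothesis into this bound produces $\Hgap_{k+1} \le (C+C^*)(k+1)\gamma^{\gamma-1}/(k+\gamma)^\gamma$, which is in turn at most $(C+C^*)(\gamma/(k+1+\gamma))^{\gamma-1}$ via the same weighted AM-GM estimate $(k+1)(k+1+\gamma)^{\gamma-1}\le (k+\gamma)^\gamma$ used at the end of the proof of Theorem~\ref{thm.gcg}.

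The only subtle point, and the place where the argument differs from Theorem~\ref{thm.gcg}, is the verification that the $\gamma$-curvature of $h^*$ relative to $\tilde f^*$ indeed bounds $D_{h^*}(-u_k-\alpha(z_k-u_k),-u_k)$. Writing $-u_k-\alpha(z_k-u_k) = -u_k + \alpha\bigl((-z_k)-(-u_k)\bigr)$, one needs $-z_k$ to be of the form $\partial \tilde f^{**}(w) = \partial \tilde f(w)$ required by the curvature definition. This follows from $\tilde f(y)=f(-y)$, which gives $\partial \tilde f(y) = -\partial f(-y)$ and hence $-z_k = -\partial f(x_k) = \partial \tilde f(-x_k)$. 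Once this identification is recorded, the induction proceeds verbatim as above, which is why the authors note that the proof is a straightforward extension of that of Theorem~\ref{thm.gcg}.
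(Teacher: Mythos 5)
Your proposal is correct and is exactly the ``straightforward extension of the proof of Theorem~\ref{thm.gcg}'' that the paper invokes when it omits this proof: the same induction on the gap sequence, here $\Hgap_k$ via~\eqref{eq.gap.hybrid} and~\eqref{eq.hybridgap}, with the line search~\eqref{eq.ls.hybrid} supplying the bound for every $\alpha\in[0,1]$ and the choice $\alpha=\gamma/(k+\gamma)$ closing the induction through the same weighted AM--GM estimate. Your explicit check that $-z_k=\partial\tilde f(-x_k)$, so that the $\gamma$-curvature of $h^*$ relative to $\tilde f^*$ applies to $D_{h^*}(-u_k-\alpha(z_k-u_k),-u_k)$, is the one detail the paper leaves implicit, and you handle it correctly.
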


The proof of Theorem~\ref{thm.gcg} readily shows that~\eqref{eq.gcg.conv} holds if $\alpha_k$ is chosen as $\alpha_k = \gamma/(k+\gamma)$.  However, this requires  knowledge of $\gamma>1$ which is unrealistic and could be too conservative.  A similar bound holds if instead $\alpha_k$ is chosen via the following approximate and more realistic line-search procedure.  Let $\delta\in (0,1)$ be a small fixed constant and choose $\alpha_k = \gamma_k/(k+\gamma_k)$ where $\gamma_k$ is such that $\gamma_k\ge \gamma-\delta$. This can be easily done via binary search as long as Assumption~\ref{assum.bregman} holds. The proof of Theorem~\ref{thm.gcg} shows that in this case the following modified version of~\eqref{eq.gcg.conv} holds
\[
f(x_k) + g(x_k) + f^*(\hat u_k) + h^*(-\hat u_k) \le C\left(\frac{\gamma-\delta }{k+\gamma-\delta}\right)^{\gamma-\delta-1}.
\]
The same considerations apply to the bounds in Corollary~\ref{thm.gmd} and Theorem~\ref{thm.hybrid}.

\medskip

We conclude this section by revisiting the role of Assumption~\ref{assum.bregman}, that is, the computability of $D_f(\cdot,\cdot)$ and $D_{h^*}(\cdot,\cdot)$. 
  As we already noted, this assumption is critical to ensure the viability of the line-search procedures~\eqref{eq.ls},~\eqref{eq.ls.gmd}, and~\eqref{eq.ls.hybrid}.  The results in this section can be sharpened under a stronger assumption as we next explain.
  %if we make the additional assumption that $D_{f^*}(\cdot,\cdot)$ and $D_{h}(\cdot,\cdot)$ are also computable.  
  If $D_{f^*}(\cdot,\cdot)$ and $D_{h}(\cdot,\cdot)$ are also computable then so are $\D_{f,h}(\cdot,\cdot,\cdot)$ and $\D_{h^*,f^*}(\cdot,\cdot,\cdot)$.  In that case the quantities $\gcggap_k, \gmdgap_k,\Hgap_k$ 
  can be sharpened by replacing~\eqref{eq.gcggap},~\eqref{eq.gmdgap}, and~\eqref{eq.hybridgap} with
  \[
 \gcggap_{k+1} = (1-\alpha_k)\gcggap_k + \D_{f,h}(x_{k},s_k,\alpha_k), \; k=1,2,\dots. 
  \]
  \[
  \gmdgap_{k+1} = (1-\alpha_k)\gmdgap_k + \D_{h^*,\tilde f^*}(v_{k},-z_k,\alpha_k), \; k=1,2,\dots
  \]
  and
  \[
  \Hgap_{k+1} = (1-\alpha_k)\Hgap_k + \D_{f,h}(x_{k},s_k,\alpha_k) + \D_{h^*,\tilde f^*}(-u_{k},-z_k,\alpha_k), \; k = 1,2,\dots
  \]
  respectively.  The line-search procedures ~\eqref{eq.ls},~\eqref{eq.ls.gmd}, and~\eqref{eq.ls.hybrid} can be  sharpened similarly.

\section{Extension to the general format}
\label{sec.extensions}
Suppose that the tuple $(X,Y,A,f,h)$  satisfies Assumption~\ref{assum.blanket}.
We next discuss how all of our previous developments extend to problems in the more general format
\begin{equation}\label{eq.primal.gral}
\min_{x\in X} \; \{ f(Ax) + h(x) \}
\end{equation}
%are closed convex functions with computable subgradient oracles for $\partial f$ and $\partial h^*$.

%Our previous developments for~\eqref{eq.primal} readily extend to ~\eqref{eq.primal.gral}.  

Algorithm~\ref{algo.gcg}, Algorithm~\ref{algo.gmd}, and Algorithm~\ref{algo.hybrid} extend to~\eqref{eq.primal.gral} as detailed in Algorithm~\ref{algo.gcg.gral}, Algorithm~\ref{algo.gmd.gral}, and Algorithm~\ref{algo.hybrid.gral} respectively.   Furthermore,  
applying Algorithm~\ref{algo.gcg.gral} (Algorithm~\ref{algo.gmd.gral}) to~\eqref{eq.primal.gral} is equivalent to applying Algorithm~\ref{algo.gmd.gral} (Algorithm~\ref{algo.gcg.gral}) to its Fenchel dual
\[%\begin{equation}\label{eq.dual.gral}
\max_{u\in Y^*} \; \{-f^*(u) - h^*(-A^* u) \},
\]%\end{equation}
which can be written as
\[%\begin{equation}\label{eq.dual.gral.again}
\min_{v\in Y^*} \; \{ h^*(A^* v) + \tilde f^*(v)\}
\]%\end{equation}
for $\tilde f: Y\rightarrow \R\cup\{\infty\}$ defined via $\tilde f(y) := f(-y)$.

\begin{algorithm}
\caption{Generalized conditional subgradient, version 2}\label{algo.gcg.gral}
\begin{algorithmic}[1]
	\STATE {\bf input:}  $(f,h,A)$ and $x_0 \in \dom(\partial f\circ A)$
	\FOR{$k=0,1,2,\dots$}
	    \STATE $u_k:= \partial f(Ax_k)$
		\STATE $s_{k}:=\partial h^*(-A^* u_k)$
		\STATE pick $\alpha_k \in [0,1]$ 
		\STATE $x_{k+1}:=(1-\alpha_k)x_k+\alpha_ks_k$
	\ENDFOR
\end{algorithmic}
\end{algorithm}

\begin{algorithm}
\caption{Generalized mirror descent, version 2}\label{algo.gmd.gral}
\begin{algorithmic}[1]
	\STATE {\bf input:} $(f,h,A)$ and  $v_0\in \dom(\partial h^* \circ A^*)$
	\FOR{$k=0,1,2,\dots$}
		\STATE $y_{k}:= \partial h^*(A^* v_{k})$
		\STATE $z_k := \partial f(Ay_k)$ 
		\STATE pick $\alpha_k \in [0,1]$ 
		\STATE $v_{k+1}:= (1-\alpha_k)v_k-\alpha_kz_k$
	\ENDFOR
\end{algorithmic}
\end{algorithm}

\begin{algorithm}
\caption{Primal-dual hybrid, version 2}\label{algo.hybrid.gral}
\begin{algorithmic}[1]
	\STATE {\bf input:} $(f,h,A)$ and  $x_0 \in \dom(\partial f \circ A), \; u_0\in -\dom(\partial h^*\circ A^*)$
	\FOR{$k=0,1,2,\dots$}
		\STATE $(s_{k},z_k):=(\partial h^*(-A^* u_k), \partial f(Ax_k))$ 
		\STATE pick $\alpha_k \in [0,1]$ 		
		\STATE $(x_{k+1},u_{k+1}):=(1-\alpha_k)(x_k,u_k)+\alpha_k(s_k,z_k)$
	\ENDFOR
\end{algorithmic}
\end{algorithm}

\bigskip

Again by weak duality~\cite{BorwL00,HiriL93,Rock70} the duality gap is non-negative:
\[
f(Ax) + h(x) + f^*(u) + h^*(-A^*u) \ge 0 \; \text{ for all } \; x\in X, u\in Y^*.
\]
Proposition~\ref{thm.main.gral}, Proposition~\ref{thm.main.2.gral}, and Proposition~\ref{thm.main.3.gral} below give upper bounds on this duality gap for the primal-dual iterates generated by Algorithm~\ref{algo.gcg.gral}, Algorithm~\ref{algo.gmd.gral}, and Algorithm~\ref{algo.hybrid.gral}.  These propositions are extensions of Theorem~\ref{thm.main}, Corollary~\ref{thm.main.2},  and Theorem~\ref{thm.main.3} respectively.  We will rely on the following notation.  Observe that for $x,s\in \dom(f)$ and $\alpha \in [0,1]$ we have
\[
\D_{f\circ A,h}(x,s,\alpha):=
D_{f\circ A}((1-\alpha)x+\alpha s,x)+ h((1-\alpha)x + \alpha s) - (1-\alpha)h(x) - \alpha h(s).
\]
Similarly, for $v,z\in \dom(h)$ and $\alpha \in [0,1]$ we have
\[
\D_{h^*\circ A^*,\tilde f^*}(v,-z,\alpha)=
D_{h^*\circ A^*}((1-\alpha)v+\alpha z,v)+ f^*(-(1-\alpha)v + \alpha z) - (1-\alpha)f^*(-v) - \alpha f^*(z).
\]

\begin{proposition}\label{thm.main.gral}
Let $(x_k, u_k,s_k), \; k=0,1,2,\dots$ be the sequence of iterates generated by Algorithm~\ref{algo.gcg.gral} applied to~\eqref{eq.primal.gral}.  If $\alpha_0 = 1$ then for $k=1,2,\dots$ 
\[
\sum_{i=0}^{k-1} \lambda_i^k(f^*(u_i) + h^*(-A^* u_i))
- \sum_{i=0}^{k-1}\mu_i^k \D_{f\circ A,h}(x_{i},s_{i},\alpha_i) = -f(Ax_k) - h(x_k) 
\]
where $\lambda_i^k, \mu_i^k$ are as in~\eqref{eq.lm}.
\end{proposition}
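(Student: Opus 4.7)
The plan is to adapt the proof of Theorem~\ref{thm.main} to accommodate the linear map $A$, preserving the inductive structure essentially verbatim. The central observation is that in Algorithm~\ref{algo.gcg.gral}, the quantity $A^* u_k = A^* \partial f(Ax_k)$ plays exactly the role that $\partial f(x_k)$ played in the identity case: by the chain rule, $A^* u_k$ is a subgradient of $f\circ A$ at $x_k$, and under the paper's notational convention the Bregman distance $D_{f\circ A}(\cdot, x_k)$ is formed using this particular subgradient.

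I would first establish the per-iteration Fenchel--Young identity. Since $u_k = \partial f(Ax_k)$ and $s_k = \partial h^*(-A^* u_k)$, the equality cases of Fenchel--Young for $f$ and $h^*$ give
\[
f^*(u_k) + h^*(-A^* u_k) = \ip{u_k}{Ax_k} - f(Ax_k) - \ip{A^* u_k}{s_k} - h(s_k) = \ip{A^* u_k}{x_k - s_k} - f(Ax_k) - h(s_k),
\]
which is the precise analogue of~\eqref{ind.step} with $\partial f(x_k)$ replaced by $A^* u_k$. I would then proceed by induction on $k$, mirroring the proof of Theorem~\ref{thm.main}. For the base case $k=1$, $\alpha_0 = 1$ forces $x_1 = s_0$, so the identity above, combined with $\D_{f\circ A, h}(x_0, s_0, 1) = D_{f\circ A}(x_1, x_0)$ and $\lambda^1_0 = \mu^1_0 = 1$, immediately yields the claim. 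For the inductive step, I would add $(1-\alpha_k)$ times the inductive hypothesis to $\alpha_k$ times the per-iteration identity. Using the recursion~\eqref{eq.lm}, the update $x_{k+1} = (1-\alpha_k)x_k + \alpha_k s_k$, and the expansion $D_{f\circ A}(x_{k+1}, x_k) = f(Ax_{k+1}) - f(Ax_k) - \alpha_k \ip{A^* u_k}{s_k - x_k}$, the right-hand side collapses to $\D_{f\circ A, h}(x_k, s_k, \alpha_k) - f(Ax_{k+1}) - h(x_{k+1})$, which together with $\mu^{k+1}_k = 1$ gives the equation for $k+1$.

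The only real subtlety is interpretational: one must consistently read $A^* u_k$ as the specific subgradient of $f\circ A$ at $x_k$ that defines $D_{f\circ A}(\cdot, x_k)$. Assumption~\ref{assum.blanket}(ii) ensures $x_k \in \dom(\partial f \circ A)$ and $-A^* u_k \in \dom(\partial h^*)$, so all the objects in the recursion are well-defined and the notational convention of Section~\ref{sec.background} applies cleanly to $f \circ A$. Beyond this bookkeeping, the argument is structurally identical to that of Theorem~\ref{thm.main}, and I anticipate no genuine obstacle beyond carrying $A$ and $A^*$ through the algebra.
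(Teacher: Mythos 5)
Your proposal is correct and is precisely the argument the paper intends: the paper omits this proof, stating it is a straightforward modification of the proof of Theorem~\ref{thm.main}, and your adaptation (replacing $\partial f(x_k)$ by $A^*u_k$ as the subgradient of $f\circ A$ at $x_k$, establishing the Fenchel--Young identity for the pair $(f,h^*)$ composed with $A$, and running the same induction) carries that modification out correctly.
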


\begin{proposition}\label{thm.main.2.gral} Let $(v_k,y_k,z_k),\; k=0,1,2,\dots$ be the sequence of iterates generated by Algorithm~\ref{algo.gmd.gral} applied to~\eqref{eq.primal.gral}.  If $\alpha_0 = 1$ then for $k=1,2,\dots$ 
\[
\sum_{i=0}^{k-1} \lambda_i^k(f(Ay_i) + h(y_i))
-\sum_{i=0}^{k-1}\mu_i^k \D_{h^*\circ A^*,\tilde f^*}(v_{i},-z_i,\alpha_i) 
=  -f^*(-v_k) - h^*(A^* v_k), 
\]
where $\lambda_i^k, \mu_i^k$ are as in~\eqref{eq.lm}.
\end{proposition}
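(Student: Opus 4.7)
The plan is to derive Proposition~\ref{thm.main.2.gral} from Proposition~\ref{thm.main.gral} by the same duality reduction that yielded Corollary~\ref{thm.main.2} from Theorem~\ref{thm.main}, now adapted to the presence of the linear map $A$.  Observe that the Fenchel dual of~\eqref{eq.primal.gral} can be written as
\[
\min_{v\in Y^*} \; \{h^*(A^* v) + \tilde f^*(v)\},
\]
which is itself an instance of~\eqref{eq.primal.gral} with the tuple $(f,h,A)$ replaced by $(h^*, \tilde f^*, A^*)$.  Thus Proposition~\ref{thm.main.gral} applies directly to it once the iterates are identified correctly.

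First I would verify that running Algorithm~\ref{algo.gmd.gral} on $(f,h,A)$ from $v_0\in\dom(\partial h^*\circ A^*)$ generates, up to a sign convention, the same sequence as Algorithm~\ref{algo.gcg.gral} on $(h^*, \tilde f^*, A^*)$ from the same $v_0$.  Denoting the Algorithm~\ref{algo.gcg.gral} iterates by $(\tilde v_k, \tilde u_k, \tilde s_k)$, a short induction using $\tilde f^{**} = \tilde f$ (by closedness and convexity of $f$), $(A^*)^* = A$, and the identity $\partial \tilde f(y) = -\partial f(-y)$ should yield
\[
(\tilde v_k, \tilde u_k, \tilde s_k) = (v_k,\, y_k,\, -z_k) \quad \text{for all } k \ge 0.
\]
Next I would invoke Proposition~\ref{thm.main.gral} on $(h^*,\tilde f^*, A^*)$ with these iterates.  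That produces an identity whose left-hand side is
\[
\sum_{i=0}^{k-1}\lambda_i^k\bigl((h^*)^*(\tilde u_i) + (\tilde f^*)^*(-(A^*)^*\tilde u_i)\bigr) - \sum_{i=0}^{k-1}\mu_i^k\, \D_{h^*\circ A^*,\tilde f^*}(\tilde v_i, \tilde s_i, \alpha_i)
\]
and whose right-hand side is $-h^*(A^* \tilde v_k) - \tilde f^*(\tilde v_k)$.  Applying $(h^*)^* = h$, $(\tilde f^*)^* = \tilde f$, $\tilde f(-Ay) = f(Ay)$, and $\tilde f^*(v) = f^*(-v)$, together with the iterate identification above, should transform this into exactly the identity claimed in Proposition~\ref{thm.main.2.gral}.

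The main obstacle is purely bookkeeping: carefully tracking sign flips and adjoints so that the $\D_{h^*\circ A^*,\tilde f^*}$ term produced by Proposition~\ref{thm.main.gral} on the dual tuple lines up verbatim with the one appearing in the claimed identity after the substitution $\tilde s_i = -z_i$.  An alternative, essentially equivalent, route is a direct induction on $k$ in the spirit of the proof of Theorem~\ref{thm.main}, using the Fenchel--Young equalities $h(y_k) + h^*(A^* v_k) = \ip{A^* v_k}{y_k}$ and $f(Ay_k) + f^*(z_k) = \ip{z_k}{Ay_k}$ together with the mirror update $v_{k+1} = (1-\alpha_k)v_k - \alpha_k z_k$; but the duality reduction is cleaner and consistent with the paper's earlier treatment of Corollary~\ref{thm.main.2} and Corollary~\ref{thm.gmd}.
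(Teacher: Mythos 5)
Your proposal is correct and takes essentially the same route as the paper: the paper omits the proof of Proposition~\ref{thm.main.2.gral}, describing it as a straightforward modification of the way Corollary~\ref{thm.main.2} follows from Theorem~\ref{thm.main} via the Bach duality between the two algorithms, which is precisely the reduction you carry out (apply Proposition~\ref{thm.main.gral} to the tuple $(h^*,\tilde f^*,A^*)$ under the iterate identification $(\tilde v_k,\tilde u_k,\tilde s_k)=(v_k,y_k,-z_k)$). Your sign bookkeeping is consistent with the special case $A=\mathrm{id}$, so nothing further is needed.
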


\begin{proposition}\label{thm.main.3.gral} Let $(x_k, u_k, s_k, z_k),\; k=0,1,2,\dots$ be the sequence of iterates generated by Algorithm~\ref{algo.hybrid.gral} applied to~\eqref{eq.primal.gral}.  If $\alpha_0 = 1$ then for $k=1,2,\dots$ 
\begin{align*}
f(Ax_k) + h(x_k) &+f^*(u_k) + h^*(-A^* u_k) \\
&= \sum_{i=0}^{k-1}\mu_i^k (\D_{f\circ A,h}(x_{i},s_i,\alpha_i)+\D_{h^*\circ A^*,\tilde f^*}(-u_{i},-z_i,\alpha_i)),
\end{align*}
where $\mu_i^k$ are as in~\eqref{eq.lm}.
\end{proposition}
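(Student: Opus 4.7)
\emph{Proof plan.} My plan is to mirror the induction argument in the proof of Theorem~\ref{thm.main.3}, with the composite functions $f\circ A$ and $h^*\circ A^*$ substituted for $f$ and $h^*$ respectively. The chain rule gives $\partial(f\circ A)(x) = A^*\partial f(Ax)$ and $\partial(h^*\circ A^*)(u) = A\,\partial h^*(A^*u)$, hence
\[
D_{f\circ A}(y,x) = f(Ay) - f(Ax) - \ip{\partial f(Ax)}{A(y-x)},
\]
and analogously for $D_{h^*\circ A^*}(\cdot,\cdot)$. Consequently, every Fenchel--Young computation in the proof of Theorem~\ref{thm.main.3} transfers to the general case by inserting $A$ or $A^*$ in the appropriate slot.

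For the base case $k=1$, the assumption $\alpha_0=1$ gives $x_1=s_0=\partial h^*(-A^*u_0)$ and $u_1=z_0=\partial f(Ax_0)$. Applying the Fenchel--Young equality to these two relations and regrouping exactly as in the proof of Theorem~\ref{thm.main.3} shows that
\[
f(Ax_1) + h(x_1) + f^*(u_1) + h^*(-A^*u_1) = D_{f\circ A}(x_1,x_0) + D_{h^*\circ A^*}(-u_1,-u_0),
\]
and the right-hand side equals $\D_{f\circ A,h}(x_0,s_0,1) + \D_{h^*\circ A^*,\tilde f^*}(-u_0,-z_0,1)$ since the convex-combination terms in $\D_{\cdot,\cdot}(\cdot,\cdot,1)$ collapse.

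For the inductive step, I would establish two auxiliary identities paralleling~\eqref{eq.pd.ind} and~\eqref{eq.pd.ind.2}. The Fenchel--Young equalities at $z_k=\partial f(Ax_k)$ and $s_k=\partial h^*(-A^*u_k)$ give
\[
f(Ax_k) + h(s_k) + f^*(z_k) + h^*(-A^*u_k) = \ip{z_k}{Ax_k} - \ip{A^*u_k}{s_k},
\]
and direct algebra using the update $(x_{k+1},u_{k+1}) = (1-\alpha_k)(x_k,u_k) + \alpha_k(s_k,z_k)$ together with the chain-rule expressions above yields a companion identity of the form
\[
f(Ax_{k+1}) - f(Ax_k) + h^*(-A^*u_{k+1}) - h^*(-A^*u_k) - D_{f\circ A}(x_{k+1},x_k) - D_{h^*\circ A^*}(-u_{k+1},-u_k) = -\alpha_k\bigl(\ip{z_k}{Ax_k} - \ip{A^*u_k}{s_k}\bigr).
\]
Combining $(1-\alpha_k)$ times the inductive hypothesis, $\alpha_k$ times the first auxiliary identity, and the second auxiliary identity, and then invoking the recursion~\eqref{eq.lm} together with the definitions of $\D_{f\circ A,h}$ and $\D_{h^*\circ A^*,\tilde f^*}$, completes the induction.

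The main obstacle is purely notational: one must track carefully how $A$ and $A^*$ appear when applying the chain rule inside each Bregman term and when passing between pairings $\ip{\cdot}{A\cdot}$ and $\ip{A^*\cdot}{\cdot}$, but no analytical ingredient beyond those in the proof of Theorem~\ref{thm.main.3} is needed.
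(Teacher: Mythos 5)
Your proposal is correct and is exactly the route the paper intends: the paper omits this proof, stating it is a ``straightforward modification'' of the proof of Theorem~\ref{thm.main.3}, and you carry out precisely that modification, with the analogues of~\eqref{eq.pd.ind} and~\eqref{eq.pd.ind.2} and the chain-rule identities $D_{f\circ A}(y,x)=D_f(Ay,Ax)$ and $D_{h^*\circ A^*}(v,u)=D_{h^*}(A^*v,A^*u)$ all checking out. No gaps.
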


We omit the proofs of Proposition~\ref{thm.main.gral}, Proposition~\ref{thm.main.2.gral}, and Proposition~\ref{thm.main.3.gral} since they are straightforward modifications of the proofs of 
Theorem~\ref{thm.main}, Corollary~\ref{thm.main.2}, and Theorem~\ref{thm.main.3}.

\medskip

The developments in Section~\ref{sec.convergence} also extend in a similar fashion.  
Proposition~\ref{thm.main.gral} implies that if $\alpha_0 = 1$ then the sequence of iterates $(x_k,u_k,s_k),\; k=0,1,2,\dots$ generated by Algorithm~\ref{algo.gcg.gral} satisfies
\[%\begin{equation}\label{eq.gap.gcg.gral}
f(Ax_k) + g(x_k) + f^*(\hat u_k) + h^*(-A^*\hat u_k) \le \gcggap_k,
\]%\end{equation}
where
\begin{equation}\label{eq.hatu.gral}
\hat u_k = \sum_{i=0}^{k-1} \lambda^k_{i} u_i \; \text{ or } \; \hat u_k = \argmin_{u_0,\dots,u_{k-1}} \{f^*(u_i)+h^*(-A^* u_i)\},
\end{equation}
$\gcggap_1 = D_f(As_0,Ax_0),$ and %the following recursion
\[%\begin{equation}\label{eq.gcggap.gral}
\gcggap_{k+1} = (1-\alpha_k)\gcggap_k + D_f(A((1-\alpha_k)x_k+\alpha_ks_k),Ax_k), \; k=1,2,\dots.
\]%\end{equation}
Similarly, Proposition~\ref{thm.main.2.gral} implies that if $\alpha_0 = 1$ then the sequence of iterates $(v_k,y_k,z_k),\; k=0,1,2,\dots$ generated by Algorithm~\ref{algo.gmd} satisfies
\[%\begin{equation}\label{eq.gap.gmd.gral}
f(A\hat y_k) + g(\hat y_k) + f^*(-v_k) + h^*(A^* v_k) \le \gmdgap_k,
\]%\end{equation}
where
\begin{equation}\label{eq.haty.gral}
\hat y_k = \sum_{i=0}^{k-1} \lambda^k_{i} y_i \; \text{ or } \; \hat y_k = \argmin_{y_0,\dots,y_{k-1}} \{f(Ay_i)+h(y_i)\},
\end{equation}
$\gmdgap_1 = D_{h^*}(-A^* z_0,A^* v_0),$ and %the following recursion
\[%\begin{equation}\label{eq.gcggap.gral}
\gmdgap_{k+1} = (1-\alpha_k)\gmdgap_k + D_{h^*}(A^*((1-\alpha_k)v_k-\alpha_kz_k),A^* v_k), \; k=1,2,\dots.
\]%\end{equation}
On the other hand, Proposition~\ref{thm.main.3.gral} implies that if $\alpha_0 = 1$ then the sequence of iterates $(x_k,u_k,s_k,z_k),\; k=0,1,2,\dots$ generated by Algorithm~\ref{algo.hybrid.gral} satisfies
\begin{equation}\label{eq.gap.hybrid.gral}
f(Ax_k) + h(x_k) + f^*(u_k) + h^*(-A^* u_k) \le \Hgap_k,
\end{equation}
where $\Hgap_1:=D_f(As_0,Ax_0) + D_{h^*}(-A^* z_0,-A^* u_0)$ and
\begin{multline*}
\Hgap_{k+1} := (1-\alpha_k)\Hgap_k \\ + D_{f}(A((1-\alpha_k)x_k+\alpha_ks_k),Ax_k) + D_{h^*}(-A^*((1-\alpha_k)u_k + \alpha_kz_k),-A^* u_k).
\end{multline*}
Consider the following procedures for step size selection.  For Algorithm~\ref{algo.gcg.gral}:
\begin{equation}\label{eq.ls.gral}
\alpha_k := \argmin_{\alpha\in[0,1]}  \left\{(1-\alpha) \gcggap_k + D_f(A(x_k+\alpha(s_k-x_k)),Ax_k)\right\}.
\end{equation}
For Algorithm~\ref{algo.gmd.gral}:
\begin{equation}\label{eq.ls.gmd.gral}
\alpha_k := \argmin_{\alpha\in[0,1]}  \left\{(1-\alpha) \gmdgap_k + D_{h^*}(A^*(v_k-\alpha(z_k+v_k)),A^* v_k)\right\}.
\end{equation}
Finally, for Algorithm~\ref{algo.hybrid.gral}:
\begin{align}\label{eq.ls.hybrid.gral}
\alpha_k := \argmin_{\alpha\in[0,1]} \{(1-\alpha)\Hgap_k &+ D_f(Ax_k + \alpha(s_k-x_k),Ax_k) \notag \\ &+ D_{h^*}(-A^*(u_k + \alpha(z_k-u_k)),-A^* u_k) \}. 
\end{align}
Again the above line-search procedures are computable via  binary search provided Assumption~\ref{assum.bregman} holds.
Theorem~\ref{thm.gcg}, Corollary~\ref{thm.gmd}, and Theorem~\ref{thm.hybrid} extend as follows.
\begin{proposition}\label{thm.gcg.gral}
Suppose $\gamma > 1$ is such that $f\circ A$ has $\gamma$-curvature relative to $h$ with constant $C$.  If $\alpha_0 = 1$ and $\alpha_k \in [0,1], \; k=1,2,\dots$ is chosen via~\eqref{eq.ls.gral} then the sequence of iterates $(x_k,u_k,s_k),\; k=0,1,2,\dots$ generated by Algorithm~\ref{algo.gcg.gral} satisfies
\[
f(Ax_k) + g(x_k) + f^*(\hat u_k) + h^*(-A^*\hat u_k) \le C\left(\frac{\gamma }{k+\gamma}\right)^{\gamma-1}
\]
where $\hat u_k$ is as in~\eqref{eq.hatu.gral}.
\end{proposition}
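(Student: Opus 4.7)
The plan is to adapt the proof of Theorem~\ref{thm.gcg} almost verbatim. Two small modifications suffice: the subdifferential of $f$ at $Ax_k$ must be paired with $A^*$ to play the role of a subgradient of $f\circ A$ at $x_k$, and the $\gamma$-curvature hypothesis must be read against $f\circ A$ rather than $f$.

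First I would invoke Proposition~\ref{thm.main.gral} together with the inequality $\D_{f\circ A,h}(x_i,s_i,\alpha_i) \le D_{f\circ A}(x_{i+1},x_i)$ (convexity of $h$) to reduce the claim to the bound $\gcggap_k \le C(\gamma/(k+\gamma))^{\gamma-1}$. This reduction also uses the identity $D_{f\circ A}(y,x) = D_f(Ay,Ax)$, which holds because $A^*\partial f(Ax)\in\partial(f\circ A)(x)$ is precisely the selection of subgradient implicit in the definition of $\gcggap_k$ following Proposition~\ref{thm.main.3.gral}.

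Next I would run the same induction on $k$ as in Theorem~\ref{thm.gcg}. For the base case $k=1$, since $\alpha_0 = 1$ forces $x_1 = s_0 = \partial h^*(-A^* u_0)$, the $\gamma$-curvature of $f\circ A$ relative to $h$ evaluated at $\alpha=1$ gives $\gcggap_1 = D_{f\circ A}(s_0,x_0) \le C/\gamma \le C(\gamma/(1+\gamma))^{\gamma-1}$ via weighted AM-GM. For the inductive step, the line-search~\eqref{eq.ls.gral} guarantees that for every $\alpha \in [0,1]$,
\[
\gcggap_{k+1} \le (1-\alpha)\gcggap_k + D_f(A(x_k+\alpha(s_k-x_k)),Ax_k) \le (1-\alpha)\gcggap_k + \frac{C\alpha^\gamma}{\gamma},
\]
where the second inequality uses $D_f(A\cdot,A\cdot) = D_{f\circ A}(\cdot,\cdot)$ together with the $\gamma$-curvature of $f\circ A$ relative to $h$ at $s_k = \partial h^*(-A^*u_k)$. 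Plugging in $\alpha = \gamma/(k+\gamma)$ and invoking the inductive hypothesis leads to the same arithmetic identity
\[
\frac{C(k+1)\gamma^{\gamma-1}}{(k+\gamma)^\gamma} \le C\left(\frac{\gamma}{k+1+\gamma}\right)^{\gamma-1}
\]
that appears at the end of the proof of Theorem~\ref{thm.gcg}, closing the induction.

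I do not expect any genuine obstacle. The only conceptual point is the identification $D_{f\circ A}(y,x) = D_f(Ay,Ax)$, and beyond that the argument is a direct transcription. The apparent complication introduced by the linear mapping $A$ has been entirely absorbed into the definitions of $\gcggap_k$ and of the line-search~\eqref{eq.ls.gral}, both of which were written precisely so that the induction in Theorem~\ref{thm.gcg} transfers without change.
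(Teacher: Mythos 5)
Your proposal is correct and follows exactly the route the paper intends: the paper omits the proof of this proposition, stating only that it is a straightforward modification of the proof of Theorem~\ref{thm.gcg}, and your argument carries out precisely that modification (reduction to $\gcggap_k \le C(\gamma/(k+\gamma))^{\gamma-1}$ via Proposition~\ref{thm.main.gral}, the identification $D_{f\circ A}(y,x)=D_f(Ay,Ax)$, and the same induction with $\alpha=\gamma/(k+\gamma)$). No gaps.
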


\begin{proposition}\label{thm.gmd.gral}
Suppose $\gamma > 1$ is such that $ h^* \circ A^*$ has $\gamma$-curvature relative to $\tilde f^*$ with constant $C^*$.  If $\alpha_0 = 1$ and $\alpha_k \in [0,1], \; k=1,2,\dots$ is chosen via~\eqref{eq.ls.gmd.gral} then the sequence of iterates $(v_k,y_k,z_k),\; k=0,1,2,\dots$ generated by Algorithm~\ref{algo.gmd.gral} satisfies
\[
f(A\hat y_k) + h(\hat y_k) + f^*(-v_k) + h^*(A^* v_k) \le C^*\left(\frac{\gamma }{k+\gamma}\right)^{\gamma-1}
\]
where $\hat y_k$ is as in~\eqref{eq.haty.gral}.
\end{proposition}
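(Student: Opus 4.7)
The plan is to derive Proposition~\ref{thm.gmd.gral} as a direct corollary of Proposition~\ref{thm.gcg.gral} applied to the Fenchel dual, leveraging the primal--dual equivalence between Algorithm~\ref{algo.gmd.gral} and Algorithm~\ref{algo.gcg.gral} spelled out at the beginning of Section~\ref{sec.extensions}. The Fenchel dual of~\eqref{eq.primal.gral} can be recast in the format~\eqref{eq.primal.gral} with data $(\tilde F, \tilde H, \tilde A):=(h^*,\tilde f^*, A^*)$, and running Algorithm~\ref{algo.gmd.gral} on $(f,h,A)$ from $v_0\in\dom(\partial h^*\circ A^*)$ produces exactly the same sequence $(v_k)$ as running Algorithm~\ref{algo.gcg.gral} on $(\tilde F,\tilde H,\tilde A)$ from the same $v_0$.

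Under this identification I would match the objects in the two runs: the mirror-descent iterate $y_k=\partial h^*(A^* v_k)$ coincides with the conditional-subgradient ``subgradient'' iterate $\partial \tilde F(\tilde A v_k)$ of the dual gcg, and one checks that step~\lq\lq $v_{k+1}=(1-\alpha_k)v_k-\alpha_k z_k$\rq\rq\ is precisely the gcg update with $s_k=-z_k=\partial\tilde H^*(-\tilde A^*\partial \tilde F(\tilde A v_k))$ (using $\tilde f^*(v)=f^*(-v)$, $\tilde f=\tilde f^{**}$, and $\partial\tilde f(-w)=-\partial f(w)$). The line-search~\eqref{eq.ls.gmd.gral} is then literally~\eqref{eq.ls.gral} rewritten in the dual data, and the hypothesis that $h^*\circ A^*$ has $\gamma$-curvature relative to $\tilde f^*$ with constant $C^*$ is exactly the hypothesis that $\tilde F\circ\tilde A$ has $\gamma$-curvature relative to $\tilde H$ with the same constant.

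With these identifications in hand, Proposition~\ref{thm.gcg.gral} applied to the dual run yields
\[
\tilde F(\tilde A v_k)+\tilde H(v_k)+\tilde F^*(\hat U_k)+\tilde H^*(-\tilde A^*\hat U_k)\;\le\; C^*\left(\frac{\gamma}{k+\gamma}\right)^{\gamma-1},
\]
where $\hat U_k=\sum_{i=0}^{k-1}\lambda_i^k y_i$ (or the argmin variant) is the weighted average of the dual gcg's subgradient iterates, which is exactly $\hat y_k$ in~\eqref{eq.haty.gral}. Translating back via $h^{**}=h$, $\tilde f^{**}=\tilde f$, $\tilde f^*(v)=f^*(-v)$, and $\tilde f(-w)=f(w)$ rewrites the four terms as $h^*(A^* v_k)+f^*(-v_k)+h(\hat y_k)+f(A\hat y_k)$, giving the claimed bound.

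The main obstacle here is nothing deep: it is just the sign and conjugation bookkeeping needed to verify that the dual gcg iterates, line-search, and averaging rule really do line up with their gmd counterparts. Once this is pinned down, the estimate is immediate from Proposition~\ref{thm.gcg.gral}, exactly paralleling how Corollary~\ref{prop.gmd} was deduced from Proposition~\ref{prop.gcg} and Corollary~\ref{thm.gmd} from Theorem~\ref{thm.gcg} in the $A=I$ case. A purely direct alternative would be to repeat the induction argument of Theorem~\ref{thm.gcg} using~\eqref{eq.gap.gmd} with its general-$A$ analogue and the recursion for $\gmdgap_k$, but the duality route avoids redoing that calculation.
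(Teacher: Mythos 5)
Your proposal is correct and matches the paper's intended route: the paper obtains this result ``automatically'' from the conditional-subgradient convergence theorem via the duality between Algorithm~\ref{algo.gcg.gral} applied to $(h^*,\tilde f^*,A^*)$ and Algorithm~\ref{algo.gmd.gral} applied to $(f,h,A)$, exactly as you do, and your sign/conjugation bookkeeping (identifying $U_k=y_k$, $S_k=-z_k$, $\tilde f^*(v)=f^*(-v)$, and the two line searches) checks out. Nothing further is needed.
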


\begin{proposition}\label{thm.hybrid.gral}
Suppose $\gamma > 1$ is such that $f\circ A$ has $\gamma$-curvature relative to $h$ with constant $C$ for some $\gamma > 1$ and 
$ h^* \circ  A^*$ has $\gamma$-curvature relative to $\tilde f^*$ with constant $C^*$.  If $\alpha_0 = 1$ and $\alpha_k \in [0,1], \; k=1,2,\dots$ is chosen via~\eqref{eq.ls.hybrid.gral} then the sequence of iterates $(x_k,u_k,s_k,z_k),\; k=0,1,2,\dots$ generated by Algorithm~\ref{algo.gcg.gral} satisfies
\[
f(Ax_k) + g(x_k) + f^*( u_k) + h^*(-A^* u_k) \le (C+C^*)\left(\frac{\gamma }{k+\gamma}\right)^{\gamma-1}.
\]
\end{proposition}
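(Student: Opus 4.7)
The plan is to follow the induction structure used in the proof of Theorem~\ref{thm.gcg}, but with the hybrid duality gap $\Hgap_k$ in place of $\gcggap_k$ and with the sum of two curvature constants $C + C^*$ in place of the single constant $C$. By inequality~\eqref{eq.gap.hybrid.gral}, it suffices to show that
\[
\Hgap_k \le (C+C^*)\left(\frac{\gamma}{k+\gamma}\right)^{\gamma-1}
\]
for $k=1,2,\dots$.

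For the base case $k=1$, I would use $\alpha_0=1$ together with the two $\gamma$-curvature hypotheses applied at $\alpha=1$. The assumption on $f\circ A$ relative to $h$ yields $D_f(As_0,Ax_0) = D_{f\circ A}(s_0,x_0) \le C/\gamma$, while the dual assumption on $h^*\circ A^*$ relative to $\tilde f^*$, applied with $v := -u_0$ and the point generated by $-z_0$ in the appropriate dual domain, yields $D_{h^*}(-A^*z_0,-A^*u_0) \le C^*/\gamma$. Adding these gives $\Hgap_1 \le (C+C^*)/\gamma$, which is bounded by $(C+C^*)(\gamma/(1+\gamma))^{\gamma-1}$ via the weighted arithmetic--geometric mean inequality, exactly as in~\eqref{eq.k.equal.1}.

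For the inductive step, the decisive observation is that the line-search~\eqref{eq.ls.hybrid.gral} ensures that $\Hgap_{k+1}$ is bounded above by the value of its objective at \emph{any} $\alpha \in [0,1]$. Choosing $\alpha = \gamma/(k+\gamma)$ and invoking both $\gamma$-curvature bounds simultaneously yields
\[
\Hgap_{k+1} \le (1-\alpha)\Hgap_k + \frac{C\alpha^\gamma}{\gamma} + \frac{C^*\alpha^\gamma}{\gamma} = (1-\alpha)\Hgap_k + \frac{(C+C^*)\alpha^\gamma}{\gamma}.
\]
Substituting the inductive hypothesis and repeating the algebraic step from the proof of Theorem~\ref{thm.gcg}, which reduces to $(k+1)(k+1+\gamma)^{\gamma-1} \le (k+\gamma)^\gamma$ via the weighted AM--GM inequality, then gives $\Hgap_{k+1} \le (C+C^*)(\gamma/(k+1+\gamma))^{\gamma-1}$, completing the induction.

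The only piece of the argument that requires a bit of care, and is the closest thing to an obstacle, is verifying that the pair $(-u_k,-z_k)$ together with the $u$-update $u_{k+1} = (1-\alpha_k)u_k + \alpha_k z_k$ lies in the correct domains so that the dual $\gamma$-curvature inequality applies at each iteration. This follows from part (ii) of Assumption~\ref{assum.blanket}, which guarantees $z_k = \partial f(Ax_k) \in -\dom(\partial h^*\circ A^*)$ and hence keeps $-u_{k+1}$ feasible along the iteration. Beyond this bookkeeping the argument is a direct transcription of the proof of Theorem~\ref{thm.gcg} and presents no genuine difficulty, which is consistent with the remark preceding Theorem~\ref{thm.hybrid}.
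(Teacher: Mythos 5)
Your proposal is correct and follows exactly the route the paper intends: the paper omits this proof, stating it is a straightforward modification of the proof of Theorem~\ref{thm.gcg} (via Theorem~\ref{thm.hybrid}), and your argument carries out precisely that induction on $\Hgap_k \le (C+C^*)\left(\frac{\gamma}{k+\gamma}\right)^{\gamma-1}$ using~\eqref{eq.gap.hybrid.gral}, the line search~\eqref{eq.ls.hybrid.gral} evaluated at $\alpha=\gamma/(k+\gamma)$, both curvature bounds, and the same AM--GM step. The domain bookkeeping via Assumption~\ref{assum.blanket}(ii) is also handled at the same level of care as the paper itself.
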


Again we omit the proofs of Proposition~\ref{thm.gcg.gral}, Proposition~\ref{thm.gmd.gral}, and Proposition~\ref{thm.hybrid.gral} since they are straightforward modifications of the proofs of 
Theorem~\ref{thm.gcg}, Corollary~\ref{thm.gmd}, and Theorem~\ref{thm.hybrid}.

\section{Conclusions}

We discussed three algorithms (Algorithm~\ref{algo.gcg.gral}, Algorithm~\ref{algo.gmd.gral}, and Algorithm~\ref{algo.hybrid.gral}) for problem~\eqref{eq.the.problem} and its Fenchel dual~\eqref{eq.the.problem.dual} that are based on the mapping
\[
(x,u) \mapsto (\partial h^*(-A^*u),\partial f(Ax)).
\]
Applying either Algorithm~\ref{algo.gcg.gral} or Algorithm~\ref{algo.gmd.gral} to~\eqref{eq.the.problem} is equivalent to applying the other one to~\eqref{eq.the.problem.dual}.
On the other hand, Algorithm~\ref{algo.hybrid.gral} treats~\eqref{eq.the.problem} and~\eqref{eq.the.problem.dual} in a completely symmetric fashion.

We established new upper bounds (Proposition~\ref{thm.main.gral}, Proposition~\ref{thm.main.2.gral}, and Proposition~\ref{thm.main.3.gral}) on the gap between
the primal and dual iterates generated by Algorithm~\ref{algo.gcg.gral}, Algorithm~\ref{algo.gmd.gral}, and Algorithm~\ref{algo.hybrid.gral}.  These bounds in turn imply that the duality gap converges to zero at a rate $\Oh(1/k^{\gamma-1})$ for $\gamma>1$  provided the functions $f,h$ satisfy some suitable $\gamma$-curvature conditions and the step sizes are judiciously chosen (Proposition~\ref{thm.gcg.gral}, Proposition~\ref{thm.gmd.gral}, and Proposition~\ref{thm.hybrid.gral}).

\bibliographystyle{plain}
%\bibliography{DualityCGMD}

\begin{thebibliography}{10}

\bibitem{Bach15}
F.~Bach.
\newblock Duality between subgradient and conditional gradient methods.
\newblock {\em SIAM Journal on Optimization}, 25(1):115--129, 2015.

\bibitem{BausBT16}
H.~Bauschke, J.~Bolte, and M.~Teboulle.
\newblock A descent lemma beyond {L}ipschitz gradient continuity: first-order
  methods revisited and applications.
\newblock {\em Mathematics of Operations Research}, 42(2):330--348, 2016.

\bibitem{BeckT03}
A.~Beck and M.~Teboulle.
\newblock Mirror descent and nonlinear projected subgradient methods for convex
  optimization.
\newblock {\em Operations Research Letters}, 31(3):167--175, 2003.

\bibitem{BorwL00}
J.~Borwein and A.~Lewis.
\newblock {\em Convex analysis and nonlinear optimization}.
\newblock Springer, New York, 2000.

\bibitem{BrauPTW18}
G.~Braun, S.~Pokutta, D.~Tu, and S.~Wright.
\newblock Blended conditional gradients: the unconditioning of conditional
  gradients.
\newblock {\em arXiv preprint arXiv:1805.07311}, 2018.

\bibitem{Breg67}
L.~Bregman.
\newblock The relaxation method of finding the common point of convex sets and
  its application to the solution of problems in convex programming.
\newblock {\em USSR {C}omputational {M}athematics and {M}athematical
  {P}hysics}, 7(3):200--217, 1967.

\bibitem{CensL81}
Y.~Censor and A.~Lent.
\newblock An iterative row-action method for interval convex programming.
\newblock {\em Journal of Optimization Theory and Applications},
  34(3):321--353, 1981.

\bibitem{Clar10}
K.~Clarkson.
\newblock Coresets, sparse greedy approximation, and the {F}rank-{W}olfe
  algorithm.
\newblock {\em ACM Transactions on Algorithms (TALG)}, 6(4):63, 2010.

\bibitem{DuchSST10}
J.~Duchi, S.~Shalev-Shwartz, Y.~Singer, and A.~Tewari.
\newblock Composite objective mirror descent.
\newblock In {\em COLT}, pages 14--26, 2010.

\bibitem{FranW56}
M.~Frank and P.~Wolfe.
\newblock An algorithm for quadratic programming.
\newblock {\em Naval Research Quarterly}, 3:95--110, 1956.

\bibitem{FreuG16}
R.~Freund and P.~Grigas.
\newblock New analysis and results for the {F}rank-{W}olfe method.
\newblock {\em Mathematical Programming}, 155(1-2):199--230, 2016.

\bibitem{GidJL16}
G.~Gidel, T.~Jebara, and S.~Lacoste-Julien.
\newblock Frank-{W}olfe algorithms for saddle point problems.
\newblock {\em arXiv preprint arXiv:1610.07797}, 2016.

\bibitem{HarcJN15}
Z.~Harchaoui, A.~Juditsky, and A.~Nemirovski.
\newblock Conditional gradient algorithms for norm-regularized smooth convex
  optimization.
\newblock {\em Mathematical Programming}, 152(1-2):75--112, 2015.

\bibitem{HiriL93}
J.~Hiriart-Urruty and C.~Lemar\'echal.
\newblock {\em Convex Analysis and Minimization Algorithms}.
\newblock Springer--Verlag, Berlin, 1993.

\bibitem{Jagg13}
M.~Jaggi.
\newblock Revisiting {F}rank-{W}olfe: Projection-free sparse convex
  optimization.
\newblock In {\em ICML}, volume~28 of {\em JMLR Proceedings}, pages 427--435,
  2013.

\bibitem{LanZ16}
G.~Lan and Y.~Zhou.
\newblock Conditional gradient sliding for convex optimization.
\newblock {\em SIAM Journal on Optimization}, 26(2):1379--1409, 2016.

\bibitem{Lu19}
H.~Lu.
\newblock ``{R}elative-continuity'' for non-{L}ipschitz non-smooth convex
  optimization using stochastic (or deterministic) mirror descent.
\newblock {\em To Appear in INFORMS Journal on Optimization}, 2019.

\bibitem{LuFN18}
H.~Lu, R.~Freund, and Y.~Nesterov.
\newblock Relatively smooth convex optimization by first-order methods, and
  applications.
\newblock {\em SIAM Journal on Optimization}, 28(1):333--354, 2018.

\bibitem{NemiY83}
A.~Nemirovsky and D.~Yudin.
\newblock {\em Problem Complexity and Method Efficiency in Optimization.}
\newblock Wiley, 1983.

\bibitem{RaoSW15}
N.~Rao, P.~Shah, and S.~Wright.
\newblock Forward--backward greedy algorithms for atomic norm regularization.
\newblock {\em IEEE Transactions on Signal Processing}, 63(21):5798--5811,
  2015.

\bibitem{Rock70}
T.~Rockafellar.
\newblock {\em Convex Analysis}.
\newblock Princeton University Press, Princeton, 1970.

\bibitem{Tebo18}
M.~Teboulle.
\newblock A simplified view of first order methods for optimization.
\newblock {\em Mathematical Programming}, pages 1--30, 2018.

\bibitem{YuZS17}
Y.~Yu, X.~Zhang, and D.~Schuurmans.
\newblock Generalized conditional gradient for sparse estimation.
\newblock {\em The Journal of Machine Learning Research}, 18(1):5279--5324,
  2017.

\bibitem{YurtUTC17}
A.~Yurtsever, M.~Udell, J.~Tropp, and V.~Cevher.
\newblock Sketchy decisions: Convex low-rank matrix optimization with optimal
  storage.
\newblock In {\em International Conference on Artificial Intelligence and
  Statistics (AISTATS)}, pages 1188--1196, 2017.

\end{thebibliography}

\end{document}